\numberwithin{equation}{section}
\newtheorem{theorem}{Theorem}[section]
\newtheorem{prop}[theorem]{Proposition}
\newtheorem{lemma}[theorem]{Lemma}
\newcommand\Item[1][]{%
  \ifx\relax#1\relax  \item \else \item[#1] \fi
  \abovedisplayskip=0pt\abovedisplayshortskip=0pt~\vspace*{-\baselineskip}}
\theoremstyle{definition}
\newtheorem{defn}[theorem]{Definition}
\theoremstyle{definition}
\newtheorem{remark}[theorem]{Remark}
\DeclareMathOperator{\Prob}{\mathbf{P}}
\DeclareMathOperator{\F}{\widehat{F}}
\DeclareMathOperator{\f}{\widehat{f}}
\DeclareMathOperator{\Out}{Out}
\title[On a class of PCA  with size-$3$ neighbourhood and their applications in percolation games]{On a class of probabilistic cellular automata with size-$3$ neighbourhood and their applications in percolation games}
\date{}
\author{Dhruv Bhasin, Sayar Karmakar, Moumanti Podder, Souvik Roy}
\address{Dhruv Bhasin, Indian Institute of Science Education and Research (IISER) Pune, Dr.\ Homi Bhabha Road, Pashan, Pune 411008, Maharashtra, India.}
\address{Sayar Karmakar, University of Florida, 230 Newell Drive, Gainesville, Florida 32605, USA.}
\address{Moumanti Podder, Indian Institute of Science Education and Research (IISER) Pune, Dr.\ Homi Bhabha Road, Pashan, Pune 411008, Maharashtra, India.}
\address{Souvik Roy, Indian Statistical Institute, 203 Barrackpore Trunk Road, Kolkata 700108, West Bengal, India.}
\email{bhasin.dhruv@students.iiserpune.ac.in}
\email{sayarkarmakar@ufl.edu}
\email{moumanti@iiserpune.ac.in}
\email{souvik.2004@gmail.com}
\begin{document}
\bibliographystyle{plainnat}

\begin{abstract}
Different versions of percolation games on $\mathbb{Z}^{2}$, with parameters $p$ and $q$ that indicate, respectively, the probability with which a site in $\mathbb{Z}^{2}$ is labeled a trap and the probability with which it is labeled a target, are shown to have probability $0$ of culminating in draws when $p+q > 0$. We show that, for fixed $p$ and $q$, the probability of draw in each of these games is $0$ if and only if a certain $1$-dimensional probabilistic cellular automaton (PCA) $F_{p,q}$ with a size-$3$ neighbourhood is ergodic. This allows us to conclude that $F_{p,q}$ is ergodic whenever $p+q > 0$, thereby rigorously establishing ergodicity for a considerable class of PCAs.
\end{abstract}

\subjclass[2010]{05C57, 37B15, 37A25, 68Q80}%60C05, 68Q87, 05C05, , 05C80, 05C65, 05D40

\keywords{percolation games on lattices; two-player combinatorial games; probabilistic cellular automata; ergodicity; probability of draw; weight function; potential function}

\maketitle

\section{Introduction}
\subsection{Overview of the paper}\label{subsec:overview} There are two aspects to this paper that share a deep connection with each other: a class of $1$-dimensional \emph{probabilistic cellular automata} (henceforth abbreviated as PCA), and a collection of \emph{percolation games} that are studied on the $2$-dimensional lattice $\mathbb{Z}^{2}$. At the very outset, we give a brief sketch of the specific PCAs and the versions of percolation games that we focus on, while their detailed descriptions are provided in \S\ref{subsec:PCA} and \S\ref{subsec:game} respectively. 

The key components constituting our PCA (that we henceforth refer to as $F_{p,q}$) are the alphabet $\{0,1\}$, the universe $\mathbb{Z}$, the neighbourhood $\mathcal{N} = \{0,1,2\}$, and the parameters $p$ and $q$ with $(p,q) \in \mathcal{S}$, where 
\begin{equation}\label{mathcal{S}_defn}
\mathcal{S} = \{(p',q') \in [0,1]^{2}: 0 < p'+q' \leqslant 1\}.
\end{equation}
A \emph{configuration} for this PCA at a given time $t$ is an element $\eta_{t}$ of $\{0,1\}^{\mathbb{Z}}$, and its evolution can be described as follows: the state $\eta_{t+1}(n)$ of a site $n \in \mathbb{Z}$ at time $t+1$ is updated, independent of the updates happening at all other sites, according to a probability distribution that is supported on $\{0,1\}$ and that depends on $(\eta_{t}(n+i): i \in \mathcal{N})$, i.e.\ on the states $\eta_{t}(n)$, $\eta_{t}(n+1)$ and $\eta_{t}(n+2)$ of the sites $n$, $n+1$ and $n+2$ at time $t$. The precise description of these probability distributions can be found in \S\ref{subsec:PCA}. We remark here that in the sequel, much of our mathematical analysis is carried out on a different but related PCA, $\F_{p,q}$, with alphabet $\{0,?,1\}$, that is referred to as the \emph{envelope} to $F_{p,q}$, since it serves to provide a more direct connection with the percolation games we study in this paper (see \S\ref{subsec:envelope_PCA}). 

We now come to an overview of the various percolation games we are concerned with, in each of which the parameters $p$ and $q$, with $(p,q) \in \mathcal{S}$, are fixed \emph{a priori}, and we begin by assigning, independently to every vertex of $\mathbb{Z}^{2}$, a (random) label that reads \emph{trap} with probability $p$, \emph{target} with probability $q$, and \emph{open} with the remaining probability $r = 1-p-q$. A token is placed at a vertex of $\mathbb{Z}^{2}$, termed the \emph{initial vertex}, at the start of each game, and two players take turns to make moves. A \emph{move} involves relocating the token from its current position $(x,y)$ (say) to any vertex in the set $\Out(x,y)$, where
\begin{enumerate}
\item $\Out(x,y) = \{(x,y+2), (x+1,y+1), (x+2,y)\}$ in the first version (henceforth denoted V1),
\item $\Out(x,y) = \{(x,y+1), (x+1,y+1), (x+2,y+1)\}$ in the second version (henceforth denoted V2), 
\item and $\Out(x,y) = \{(x+1,y), (x,y+1), (x-1,y+2)\}$ in the third version (henceforth denoted V3) of percolation games that we study in this paper. 
\end{enumerate}
In each version, a player loses if she relocates the token to a vertex labeled as a trap, and she wins if she relocates the token to a vertex labeled as a target. The game may also continue forever, with neither player being able to reach a target nor being able to force her opponent to fall into a trap, and in this case, we say that the game results in a draw.

The two primary goals of this paper are to establish Theorem~\ref{thm:main_1} and Theorem~\ref{thm:main_2}, stated below, the first of which concerns itself with the outcome of draw in each of the versions V1, V2 and V3 of percolation games mentioned above, and the second establishes ergodicity (see Definition~\ref{ergodicity_defn}) for the PCA $F_{p,q}$.
\begin{theorem}\label{thm:main_1}
The probability of draw in each version of percolation games described above is $0$ for every $(p,q) \in \mathcal{S}$, where $\mathcal{S}$ is as defined in \eqref{mathcal{S}_defn}.
\end{theorem}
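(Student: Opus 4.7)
The strategy is to reduce Theorem~\ref{thm:main_1} to a convergence statement for the envelope PCA $\F_{p,q}$ and then establish that convergence by a potential/weight function argument. In each of V1, V2, V3 the move set $\Out(x,y)$ maps a vertex on one ``layer'' of $\mathbb{Z}^{2}$ to three consecutive vertices on the next layer---anti-diagonals $\{x+y=n\}$ for V1, and horizontal lines $\{y=n\}$ for V2 and V3. Assigning each vertex a status in $\{0, 1, ?\}$ via the usual backward-induction fixed-point convention (with $?$ standing for ``draw/undetermined''), the status at a vertex is a deterministic function of its own trap/target/open label and the three statuses on the next layer, which is precisely the local update rule of $\F_{p,q}$. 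Iterating $\F_{p,q}$ from the all-$?$ configuration toward the origin's layer therefore yields
\begin{equation*}
\Prob(\text{origin is a draw}) \;=\; \lim_{t \to \infty} \hat{\mu}_{t}\bigl(\eta(0) = ?\bigr),
\end{equation*}
where $\hat{\mu}_{t}$ is the $t$-th iterate of $\F_{p,q}$ on the point mass at $?^{\mathbb{Z}}$.

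I would next exploit the monotonicity of $\F_{p,q}$: a site that ever becomes resolved to $0$ or $1$ stays resolved, so $t \mapsto \hat{\mu}_{t}(\eta(0) = ?)$ is nonincreasing with a limit $\Phi_{*}(p,q) \in [0,1]$, and the weak limit $\hat{\mu}_{*}$ of $\hat{\mu}_{t}$ is both $\F_{p,q}$-invariant and translation invariant. To force $\Phi_{*}(p,q) = 0$ whenever $(p,q) \in \mathcal{S}$, I would construct a potential functional $\Psi$ on translation-invariant measures on $\{0, ?, 1\}^{\mathbb{Z}}$ that bounds the $?$-density and satisfies $\Psi(\F_{p,q}\mu) < \Psi(\mu)$ whenever $\mu(\eta(0)=?) > 0$. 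The $?$-density alone will not suffice, since a $?$-site surrounded by $?$-sites can persist under one update; a natural refinement is to add a correction term weighting long runs of $?$s, since a positive-probability trap or target event at the boundary of such a run forces it to shorten, and choosing the weights suitably should deliver the strict decrease.

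The principal obstacle is verifying this strict decrease at the level of the invariant measure $\hat{\mu}_{*}$, because ``deep interior'' $?$-sites are precisely what must be eliminated. I would enumerate all $3^{3}$ patterns on three consecutive symbols to classify exactly those under which $\F_{p,q}$ outputs $?$, and combine this enumeration with the constraint $p+q > 0$ to rule out any nontrivial $?$-supported translation-invariant fixed point of $\F_{p,q}$. Once $\Phi_{*}(p,q) = 0$ is in hand, the correspondence from the first paragraph immediately gives probability $0$ of draw from every starting vertex in each of V1, V2, V3, completing the proof of Theorem~\ref{thm:main_1}.
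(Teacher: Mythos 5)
Your overall architecture is the same as the paper's: identify the $\{0,1,?\}$ game-classification recursion on successive layers with the envelope PCA $\F_{p,q}$, use monotonicity to pass to a translation-invariant stationary measure, and then kill the $?$-density of every such measure by a weight/potential argument. (One small slip in the reduction: in V3 the moves $(x+1,y)$, $(x,y+1)$, $(x-1,y+2)$ do not stay on a horizontal line; the correct layers are the diagonals $x+y=k$, identified with $\mathbb{Z}$ via the $y$-coordinate, exactly as the paper does. Also, the paper additionally reduces to measures that are \emph{reflection}-invariant as well as translation-invariant, via the analogue of Lemma~\ref{lem:2}, and this extra symmetry is used repeatedly in its cylinder-set identities; your sketch omits it.)

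The genuine gap is that the heart of the proof --- the statement you call ``choosing the weights suitably should deliver the strict decrease'' --- is precisely Theorem~\ref{thm:main_3}, and you have not constructed or verified any weight function. Enumerating the $3^{3}$ local patterns cannot by itself rule out a $?$-supported invariant measure, because a $?$ whose whole neighbourhood is $?$ regenerates $?$ with probability $r$, which is close to $1$ in the hard regime $p+q$ small; this is exactly why a quantitative Lyapunov functional is needed. Moreover, your proposed ansatz ($?$-density plus penalties on long runs of $?$s) is not close to what actually works here: the paper's functional \eqref{final_weight} is a signed combination of about a dozen mixed cylinder probabilities such as $\mu(?0?)$, $\mu(100?)$, $\mu(1?)$, $\mu(10?)$, $\mu(1?01)$, $\mu(1?00)$, $\mu(10?0)$, with coefficients depending delicately on $p,q,r$, and the inequality of the form \eqref{rough_inequality_form} is only obtained after the combinatorial inequalities \eqref{ineq_1}--\eqref{ineq_2} (Tables~\ref{table_1}--\ref{table_4}) and four successive adjustment steps (\S\ref{subsec:compose_2}--\S\ref{subsec:compose_5}), followed by a separate bootstrap in the case $q=0$, $p>0$ (where one first gets $\mu(10?)=\mu(000?1)=0$ and only then deduces $\mu(\widehat{***})=0$). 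Nothing in your sketch supplies, or makes plausible, this construction; as written, the proposal reduces the theorem to exactly the open step without proving it.
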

\begin{theorem}\label{thm:main_2}
For all $(p,q) \in \mathcal{S}$, the PCA $F_{p,q}$ is ergodic.
\end{theorem}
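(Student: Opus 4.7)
The plan is to derive Theorem~\ref{thm:main_2} as a near-immediate consequence of Theorem~\ref{thm:main_1}, via the equivalence advertised in \S\ref{subsec:overview} between the vanishing of draw probability in the percolation games and ergodicity of the PCA $F_{p,q}$. Concretely, I expect the paper to isolate a proposition asserting that for every $(p,q)\in\mathcal{S}$, the PCA $F_{p,q}$ is ergodic if and only if in at least one of the versions V1, V2, V3 the probability of draw is $0$. Once this equivalence is available, Theorem~\ref{thm:main_2} follows by simply quoting Theorem~\ref{thm:main_1}.

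The main work therefore lies in establishing the equivalence, and the envelope PCA $\widehat{F}_{p,q}$ on the alphabet $\{0,?,1\}$ is the natural bridge. Interpreting the symbols $0$, $1$, and $?$ as ``current player loses'', ``current player wins'', and ``undecided'', one expects the $t$-step image of the all-$?$ configuration under $\widehat{F}_{p,q}$ to encode $t$ rounds of backward induction on the percolation game, so that the asymptotic $?$-density at a site equals the probability of draw from that site. On the other side, one couples two copies of $F_{p,q}$ started from arbitrary initial laws and argues that the $?$-component of $\widehat{F}_{p,q}$ dominates their coordinate-wise disagreement; vanishing of the $?$-mass then forces the two trajectories to agree almost surely at every site, which is precisely ergodicity.

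The hardest step will be making this correspondence rigorous in both directions: on the game side, producing a measure-preserving realisation of the random labels on $\mathbb{Z}^{2}$ that renders each round of backward induction as a single step of $\widehat{F}_{p,q}$ applied to the appropriate $1$-dimensional slice; on the PCA side, building a genuine monotone coupling of $\widehat{F}_{p,q}$ with pairs of trajectories of $F_{p,q}$ so that the $?$-mass truly controls their discrepancy uniformly in the initial laws. Provided these pieces are in place, Theorem~\ref{thm:main_2} is obtained by invoking Theorem~\ref{thm:main_1} to kill the asymptotic $?$-mass and unwinding the coupling to conclude convergence to a unique invariant measure for $F_{p,q}$, for every $(p,q)\in\mathcal{S}$.
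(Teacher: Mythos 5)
Your proposal matches the paper's route: Theorem~\ref{thm:main_2} is obtained there by combining Theorem~\ref{thm:main_1} with Proposition~\ref{prop:ergodicity_implies_draw_probab_0} (draw probability $0$ if and only if $F_{p,q}$ is ergodic), and that equivalence is established exactly as you sketch -- the envelope PCA $\F_{p,q}$ encodes the game's backward induction on diagonals/horizontal lines and simultaneously couples pairs of $F_{p,q}$-trajectories so that the $?$-mass dominates their disagreement, with the monotonicity inputs supplied by Lemmas~\ref{lem:1} and~\ref{lem:2} and the remaining details imported from Propositions 2.1--2.2 of \cite{holroyd2019percolation}. So your approach is essentially the same as the paper's.
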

We give a brief outline of the essential constituents of the proofs of these theorems. For each $(p,q) \in \mathcal{S}$ and for each of the versions V1, V2 and V3, it is shown, via connections explained in \S\ref{subsec:envelope_PCA}, that the probability of the event that a game starting from the origin in $\mathbb{Z}^{2}$ culminates in a draw is equal to the probability that the symbol $?$ occupies the origin in $\mathbb{Z}$ under a certain stationary distribution (see Definition~\ref{ergodicity_defn}) for $\F_{p,q}$. Theorem~\ref{thm:main_3} is then used to establish that the probability of appearance of $?$ at any given site in $\mathbb{Z}$ is $0$ under every stationary distribution for $\F_{p,q}$, thus yielding the conclusion stated in Theorem~\ref{thm:main_1}. Proposition~\ref{prop:ergodicity_implies_draw_probab_0} guarantees that the probability of draw in any of V1, V2 and V3 is $0$ if and only if the PCA $F_{p,q}$ is ergodic. Theorem~\ref{thm:main_2} is thus established by combining the implications of Theorem~\ref{thm:main_1} and Proposition~\ref{prop:ergodicity_implies_draw_probab_0}.

\subsection{Brief discussion of the literature}\label{subsec:literature} A \emph{cellular automaton}, also referred to as a \emph{deterministic cellular automaton} (and henceforth abbreviated as CA) is a discrete dynamical system that consists of
\begin{enumerate*}
\item a regular network of automata (or \emph{cells}) indexed by $\mathbb{Z}^{d}$ for some $d \in \mathbb{N}$, 
\item a finite set of states $\mathcal{A}$ termed the \emph{alphabet}, 
\item a finite set of indices $\mathcal{N} \subset \mathbb{Z}^{d}$ termed the \emph{neighbourhood},
\item and a \emph{local update rule} $f: \mathcal{A}^{\mathcal{N}} \rightarrow \mathcal{A}$.
\end{enumerate*}
The state (in $\mathcal{A}$) of each cell $\mathbf{x} \in \mathbb{Z}^{d}$ is updated by applying $f$ to the current states of all the cells $\mathbf{x} + \mathbf{y}$ where $\mathbf{y} \in \mathcal{N}$, and this process is repeated at discrete time steps (see \cite{CA_survey} for a detailed survey on CAs). It is not surprising, therefore, that many naturally occurring processes (such as collisions between moving point particles in a regular lattice that serves as a model for fluid dynamics) that are governed by local and homogeneous underlying rules can be efficiently modeled and simulated using CAs. CAs correspond to all those functions on $\mathcal{A}^{\mathbb{Z}^{d}}$ that are continuous with respect to the product topology and that commute with translations (see Corollary 76, Proposition 77 and other relevant discussions in Section 5 of \cite{kari_survey}, and \cite{hedlund}). Moreover, CAs serve as mathematical models for massive parallel computations, their relatively simple update rules allow them to be computationally universal, and they are capable of simulating any Turing machine (\cite{Turing_1, Turing_2, Turing_3, Turing_4, Turing_5}). It is no wonder, consequently, that CAs have emerged as a useful tool for studying computations in natural processes as well as physical aspects and physical limits of computation. 

A \emph{probabilistic cellular automaton} (PCA) is obtained when the corresponding update rule is random. PCAs may be interpreted as discrete-time Markov chains on the state space $\mathcal{A}^{\mathbb{Z}^{d}}$ as well as generalizations of CAs. In a $d$-dimensional PCA, the state $\eta_{t+1}(\mathbf{x})$, at time $t+1$, of a cell $\mathbf{x} \in \mathbb{Z}^{d}$ is a random variable whose probability distribution (supported on $\mathcal{A}$) is governed by the states $\eta_{t}(\mathbf{x}+\mathbf{y})$, at time $t$, of the cells $\mathbf{x}+\mathbf{y}$, for all $\mathbf{y} \in \mathcal{N}$. PCAs are obtained in computer science via \emph{random perturbations} of CAs, and the motivation behind studying them includes 
\begin{enumerate*}
\item investigating the fault-tolerant computational capability of CAs (\cite{fault_1, fault_2}),
\item classifying elementary CAs by using their robustness to errors and perturbations as a discriminating criterion (\cite{robust_1, robust_2}), 
\item the intimate connection PCAs have with Gibbs potentials and Gibbs measures in statistical mechanics (\cite{Gibbs, stat_mech_PCA, stat_mech_PCA_2}) as well as with combinatorial models such as directed animals (\cite{PCA_survey, directed_animals_1, directed_animals_2, directed_animals_3, directed_animals_4}), queues ([\cite{PCA_survey}, \S 4.3]) and percolation (\cite{percolation_1, percolation_2, percolation_3, percolation_4, percolation_5}), 
\item and the crucial role PCAs play in modeling several complex systems appearing in physics, chemistry and biology. 
\end{enumerate*} We refer the reader to \cite{PCA_survey_old}, and the more recent \cite{PCA_survey}, for a detailed survey on how the theory of PCAs has developed over the years. 

In addition, to give the reader a glimpse into the vast and variegated applications of PCAs in various disciplines, we refer to \cite{louis_nardi} that offers an insight into how PCAs are implemented in probability, statistical mechanics, computer sciences, natural sciences, dynamical systems and computational cell biology (for instance, the cellular Potts model, stability of emerging patterns, time to attain stationarity in simulation algorithms, transient regimes etc.). We allude to \cite{stat_mech_PCA} that explores the correspondence between stationary measures for PCAs on $\mathbb{Z}^{d}$ and translation-invariant Gibbs measures for related Hamiltonians on $\mathbb{Z}^{d+1}$, leading to a proof that in high-temperature regimes, stationary measures for PCAs are Gibbsian and thereby establishing uniqueness of these stationary measures and exponential decay of correlations in high-noise regimes and phase transition results in low-noise regimes. We refer to \cite{spin_models_PCA} in which the equivalence between $d$-dimensional PCAs and $(d+1)$-dimensional equilibrium spin models satisfying certain disorder conditions is used to analyze phase diagrams, critical behaviour and universality classes of certain automata, to \cite{spatial_dynamical_systems_PCA} that utilizes PCAs (such as Direct Simulation Monte Carlo and Lattice Gas Cellular Automata) as a universal tool to model the highly complex behaviour of non-linear spatially extended dynamical systems, with emphasis on the effect of fluctuations on the dynamics of such systems, Turing structure formation, effects of hydrodynamic modes on the behaviour of non-linear chemical systems (i.e.\ stirring effects), bifurcation changes in the dynamical regimes of complex systems with restricted geometry or low spatial dimensions, descriptions of chemical systems in microemulsions etc., to \cite{stat_mech_PCA_2} which establishes necessary and sufficient conditions under which PCA rules possess underlying Hamiltonians and are thus reversible, and that even for irreversible rules, continuous ferromagnetic transitions in PCAs with up-down symmetry belong to the universality class of kinetic Ising models. In \cite{mean_field_PCA}, PCAs are compared with mean field models; \cite{astrobio_PCA} argues that PCAs serve as the best quantitative framework for modeling astrobiological history of the Milky Way and its Galactic Habitable Zone; \cite{DNA_PCA} uses PCA to model DNA sequence evolutions (such as that of functional genes during phylogenesis) and foretell the likelihood of specific mutations while taking into account mutation rates that depend on the identity of the neighbouring bases, the combined effects of both left and right neighbours on a cell in paired neighbouring mutations, and the processes of natural selection; \cite{crystal_plasticity_PCA} formulates a framework to simulate dynamic recrystallization in hexagonal close-packed metals and alloys using crystal plasticity based finite element model coupled with a PCA approach. We reiterate to the reader that the references mentioned above form just a minuscule fraction of the diverse ways PCAs have found applications in several branches of mathematics, physics, chemistry and biological sciences.

We now come to a more nuanced discussion, focusing on a few of those articles (keeping in mind that this list too is, by no means, exhaustive) that study PCAs from perspectives closely related to how we approach $F_{p,q}$ and $\F_{p,q}$. We begin our discussion with \cite{holroyd2019percolation}, not only because this serves as the primary inspiration for our work in this paper, but also because percolation games were introduced and studied for the first time in \cite{holroyd2019percolation}. The percolation game considered in \cite{holroyd2019percolation} admits $\Out(x,y) = \{(x+1,y), (x,y+1)\}$ for every $(x,y) \in \mathbb{Z}^{2}$, and whether the probability of a draw is strictly positive or not is shown to be intimately connected with the ergodicity of a family of elementary $1$-dimensional PCAs (see \S\ref{subsec:PCA} for details). On the other hand, certain analogous games played on various directed graphs in higher dimensions (such as on an oriented version of the even sub-lattice of $\mathbb{Z}^{d}$ for all $d \geqslant 3$) are shown to exhibit positive probabilities of draw for suitable values of the parameters involved -- a fact that is established via dimension reduction to a hard-core lattice gas model in dimension $d-1$, and by showing that draws occur whenever the corresponding hard-core model fails to have a unique Gibbs measure. 

In \cite{marcovici_sablik_taati}, it is shown that various families of CAs, such as nilpotent CAs, permutive CAs, gliders, CAs with spreading symbols, surjective CAs and algebraic CAs, are highly unstable against various types of noises, such as zero-range noise, memoryless noise, additive noise, permutive noise and birth-death noise, in the sense that they forget their initial conditions under the slightest postive noise -- a fact that is reflected in the ergodicity of the PCAs that result from these CAs under the application of said noise. \cite{marcovici_sablik_taati} also discusses results on the stronger notion of uniform ergodicity of PCAs as well as spatial mixing, computability and admittance of perfect sampling algorithms for their unique stationary or invariant measures, and the techniques used include couplings, entropy and Fourier analysis. In \cite{casse_markovici}, a family of $1$-dimensional PCAs with memory two is studied as these arise naturally from the $8$-vertex model, directed animals, gaz models, TASEP and various other models of statistical physics -- in such a PCA, the state $\eta_{t+1}(n)$ of the site $n \in \mathbb{Z}$ at time $t+1$ is a random variable whose probability distribution is a function of the states $\eta_{t}(n-1)$ and $\eta_{t}(n+1)$ of its two nearest neighbours $n-1$ and $n+1$ at time $t$, and its own state $\eta_{t-1}(n)$ at time $t-1$. In \cite{casse_markovici}, conditions are proposed under which the invariant measures for these PCAs can be expressed either in a product form or in a Markovian form, ergodicity results that hold in this context are proved, and the phenomenon of reversibility of the stationary space-time diagrams of these PCAs is investigated, leading to the discovery of families of Gibbs random fields on the square lattice that have fascinating geometric and combinatorial properties. In \cite{bresler_guo_polyanskiy}, a computer-assisted proof of ergodicity is provided for two PCAs whose update rules can be respectively expressed as $\eta_{t+1}(n) = \text{BSC}_{p}(\text{NAND}(\eta_{t}(n-1), \eta_{t}(n)))$ (referred to as the vertex noise scenario) and $\eta_{t+1}(n) = \text{NAND}(\text{BSC}_{p}(\eta_{t}(n-1), \eta_{t}(n)))$ (referred to as the edge noise scenario), with $p \in (0,\epsilon)$ for some suitable $\epsilon > 0$. Here, BSC$_{p}$ refers to a binary symmetric channel that takes a bit as input and flips it with probability $p$, leaving it unchanged with probability $1-p$. Similar to the approach adopted in this paper, \cite{bresler_guo_polyanskiy} utilizes the notion of \emph{weight function} or \emph{potential function} (see \S\ref{sec:weight_function} for details) that was introduced in \cite{holroyd2019percolation}, but instead of any explicit manual computations, they implement local feasibility of a suitable polynomial linear programming (PLP) to guarantee the existence of a desired potential function that then helps establish the above-mentioned ergodicity results.

\subsection{Organization of the paper}\label{subsec:org} The percolation games we investigate in this paper are discussed and illustrated in \S\ref{subsec:game}, the PCA $F_{p,q}$ is formally introduced in \S\ref{subsec:PCA}, and its envelope $\F_{p,q}$, along with the deep ties it exhibits with the games, is elaborated on in \S\ref{subsec:envelope_PCA}. The results forming the crux of this paper (apart from Theorem~\ref{thm:main_1}) are stated in Proposition~\ref{prop:ergodicity_implies_draw_probab_0}, Proposition~\ref{prop:ergodicity_equivalence}, Theorem~\ref{thm:main_3} and Theorem~\ref{thm:main_4}, while lemmas required to prove Propositions~\ref{prop:ergodicity_implies_draw_probab_0} and \ref{prop:ergodicity_equivalence} are discussed in \S\ref{sec:prelim_results}. Mathematically, the most significant section of this paper is \S\ref{sec:weight_function}, in which the method of weight functions is employed to prove Theorem~\ref{thm:main_3}, and the pivotal steps via which this is accomplished are outlined in \S\ref{subsec:cylinder_computations} through \S\ref{subsec:compose_5}. %, \S\ref{subsec:ineq}, \S\ref{subsec:compose_1}, \S\ref{subsec:compose_2}, \S\ref{subsec:compose_3}, \S\ref{subsec:compose_4} and

\section{The principal objects studied in this paper}\label{sec:game&PCA}

\subsection{Our percolation games}\label{subsec:game} We begin by describing, in detail, the versions V1, V2 and V3 of percolation games that we briefly dwelt on in \S\ref{subsec:overview}. The permitted moves in each version are illustrated via the directed (as indicated by the arrowheads), dashed, black lines in Figures~\ref{fig:V1}, \ref{fig:V2} and \ref{fig:V3}.
\begin{figure}[h!]
  \centering
    \includegraphics[width=0.3\textwidth]{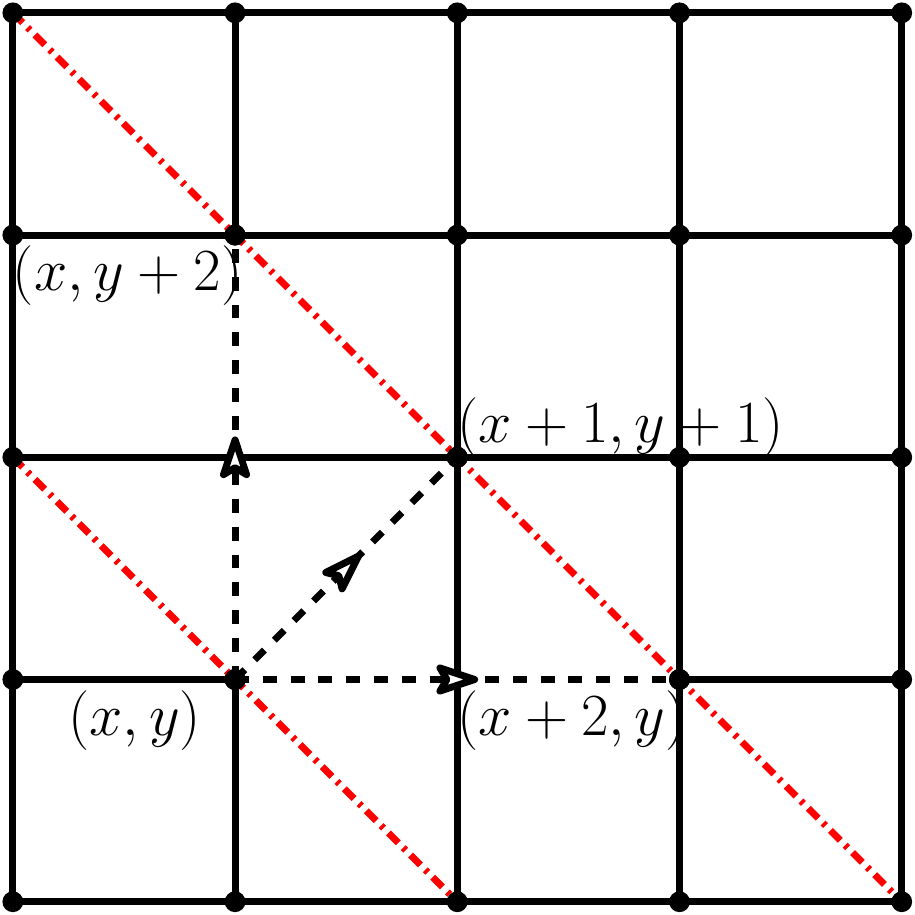}
\caption{$\Out(x,y)$ in version V1}
  \label{fig:V1}
\end{figure}
\begin{figure}[!htb]
    \centering
    \begin{minipage}{.5\textwidth}
        \centering
        \includegraphics[width=0.6\textwidth]{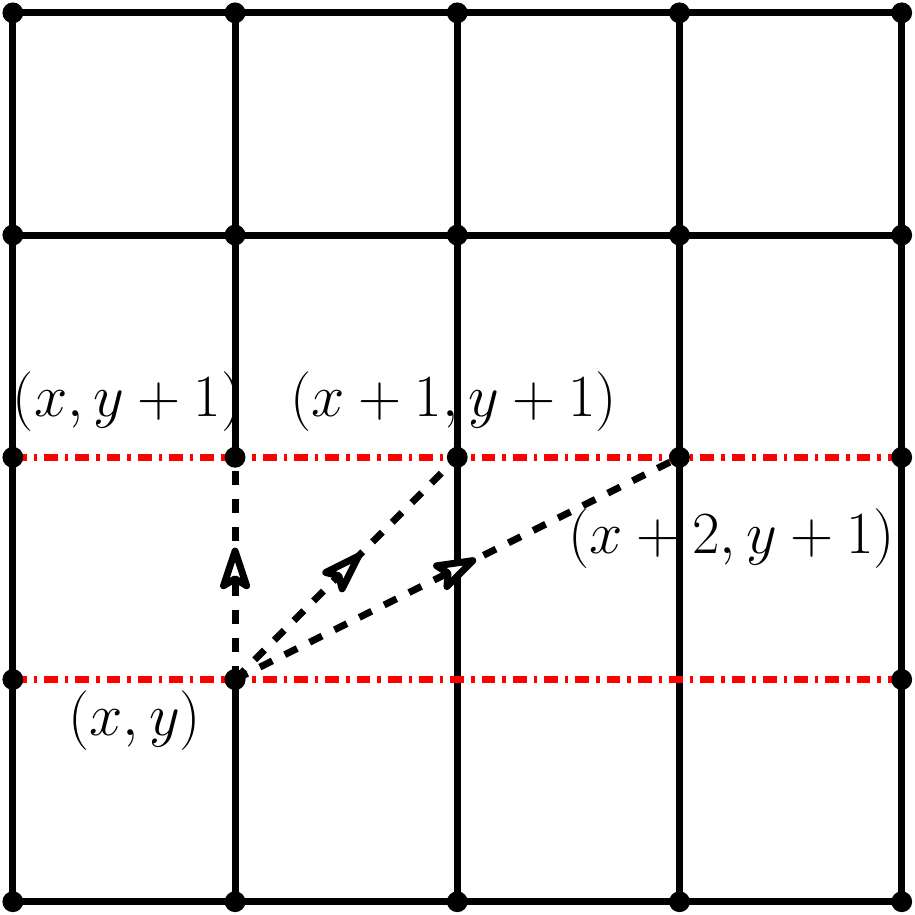}
        \caption{$\Out(x,y)$ in version V2}
        \label{fig:V2}
    \end{minipage}%
    \begin{minipage}{0.5\textwidth}
        \centering
        \includegraphics[width=0.6\textwidth]{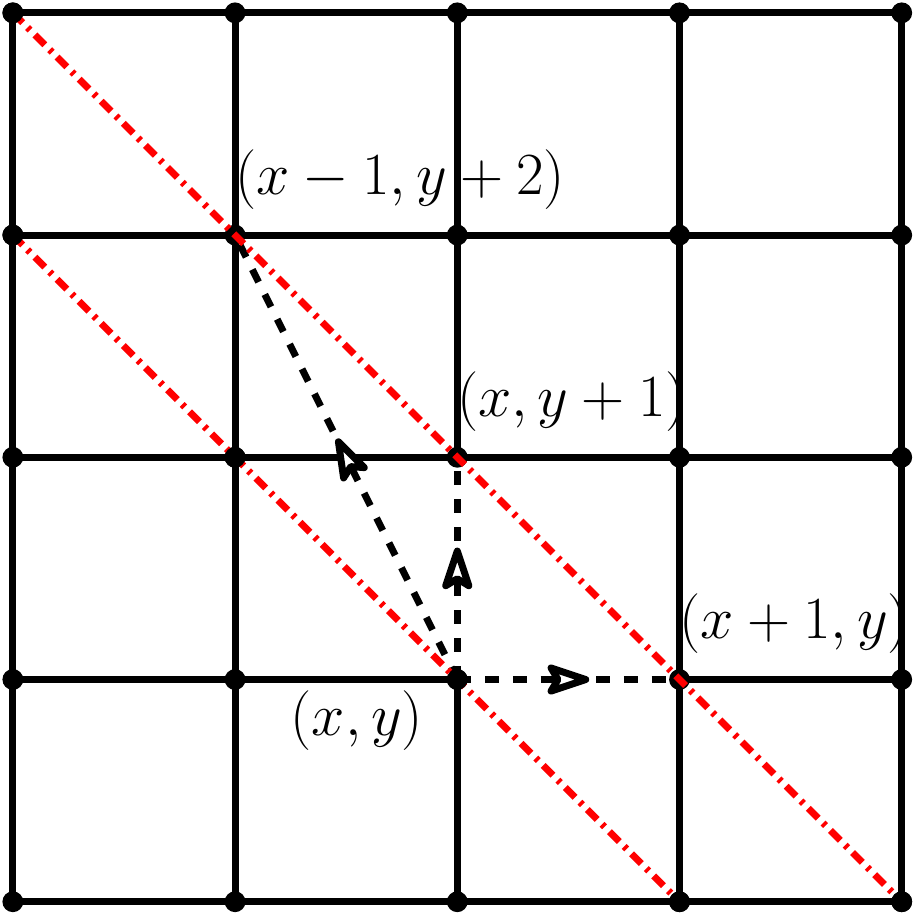}
        \caption{$\Out(x,y)$ in version V3}
        \label{fig:V3}
    \end{minipage}
\end{figure}

We recall from \S\ref{subsec:overview} that each vertex (or site) of $\mathbb{Z}^{2}$ is assigned, independent of all else, a label that is a trap with probability $p$, a target with probability $q$, and open with probability $r = 1-p-q$, where $(p,q) \in \mathcal{S}$. Starting from an initial vertex, the two players take turns to move the token from its current position, say $(x,y)$, to any vertex in $\Out(x,y)$. A player wins if she is able to move the token to a target or if her opponent is forced to move it to a trap. We clarify here that, once $\mathbb{Z}^{2}$ has been endowed with a trap / target / open labeling, to say that the corresponding game is won by a player is to assert that she has a strategy which, when employed, allows her to win no matter what strategy her opponent adopts. The game continues for as long as the token does not land on a site that is marked either a target or a trap, and this could happen indefinitely, leading to a draw. We assume that both players play optimally. The primary question of interest to us is the same as that in \cite{holroyd2019percolation}, i.e.\ for what values of the parameters $p$ and $q$ the game exhibits a positive probability of draw. As previously mentioned, the proof pivots upon the connection each of these games has with the envelope PCA $\F_{p,q}$.

In this context, we mention \cite{basu_holroyd_martin_wastlund} that studies the following two-player combinatorial game on any graph: starting from an initial vertex, the players take turns to move a token, where a move involves relocating the token from the vertex where it is currently situated to a neighbour of that vertex that has not yet been visited. A player who is unable to move loses the game. On $\mathbb{Z}^{2}$, in which odd and even sites, independently, are marked closed (i.e.\ forbidden from being visited by the token) with probabilities $p$ and $q$ respectively, it is shown that the game has probability $0$ of ending in a draw provided closed sites of one parity are sufficiently rare compared to closed sites of the other parity. This question, however, remains open when the percolation parameters $p$ and $q$ are equal. Motivations for studying the games addressed in \cite{basu_holroyd_martin_wastlund} include their deep connections with maximum-cardinality matchings in graphs, and in particular, the ways in which draws in these games relate to sensitivity of such matchings to boundary conditions.

\subsection{Our PCA}\label{subsec:PCA} Recall that we have alluded to the specific PCA $F_{p,q}$ that is of interest to us in \S\ref{subsec:overview}, and included a brief description of PCAs in general in \S\ref{subsec:literature}. A PCA $F$ defined on the lattice $\mathbb{Z}^{d}$ (and hence referred to as a $d$-dimensional PCA), for some $d \in \mathbb{N}$, comprises a finite set of states $\mathcal{A}$ that is called its \emph{alphabet}, a finite set of indices $\mathcal{N} = \{\mathbf{y}_{1}, \mathbf{y}_{2}, \ldots, \mathbf{y}_{m}\} \subset \mathbb{Z}^{d}$ that is called its \emph{neighbourhood}, and a stochastic matrix $\varphi: \mathcal{A}^{m} \times \mathcal{A} \rightarrow [0,1]$ that is called its (random) \emph{local update rule}. Given a \emph{configuration} $\eta$ in the set $\Omega = \mathcal{A}^{\mathbb{Z}^{d}}$, we apply $F$ to $\eta$, obtaining a (random) configuration $F \eta$, in which the state $F \eta(\mathbf{x})$ of the site $\mathbf{x} \in \mathbb{Z}^{d}$ is a random variable whose probability distribution is given by
\begin{align}\label{general_update_rule_eq}
\Prob[F \eta(\mathbf{x}) = b\big|\eta(\mathbf{x}+\mathbf{y}_{i}) = a_{i} \text{ for all } 1 \leqslant i \leqslant m] = \varphi(a_{1}, a_{2}, \ldots, a_{m}, b) \text{ for all } b \in \mathcal{A},
\end{align}  
\sloppy for any $a_{1}, a_{2}, \ldots, a_{m} \in \mathcal{A}$. Here, by definition of stochastic matrices, for all $a_{1}, a_{2}, \ldots, a_{m}, b \in \mathcal{A}$, we have $\varphi(a_{1},a_{2},\ldots,a_{m},b) \geqslant 0$ and $\sum_{b \in \mathcal{A}} \varphi(a_{1},a_{2},\ldots,a_{m},b) = 1$. The updation from $\eta(\mathbf{x})$ to $F \eta(\mathbf{x})$ happens independently over all sites $\mathbf{x} \in \mathbb{Z}^{d}$. Since we consider discrete-time PCAs, it makes sense to indicate by $\eta_{t}$ the configuration at time $t \in \mathbb{N}_{0}$, so that $\eta_{t+1} = F \eta_{t}$ for all $t \in \mathbb{N}_{0}$. We call a PCA \emph{elementary} when it is defined on $\mathbb{Z}$ (i.e.\ $d = 1$) and $|\mathcal{N}| = |\mathcal{A}| = 2$. We refer the reader to [\cite{marcovici_sablik_taati}, \S 2] and \cite{PCA_survey} for excellent expositions on PCAs in general.

Next, we come to a brief discussion regarding the notion of ergodicity of a $d$-dimensional PCA. To begin with, we let $\mathcal{F}$ denote the $\sigma$-field that is generated by the cylinder sets of $\Omega  = \mathcal{A}^{\mathbb{Z}^{d}}$, and we let $\mathbb{D}$ denote the set of all probability measures supported on $\Omega$ and defined with respect to $\mathcal{F}$. We emphasize here that every probability measure on $\Omega$ that is henceforth mentioned belongs to the set $\mathbb{D}$. We define $F^{t}\eta = F(F^{t-1}\eta)$ for $\eta \in \Omega$ and $t \in \mathbb{N}$ (in particular, $F^{1}\eta = F \eta$). In other words, $F^{t}\eta$ is the (random) configuration that is obtained by applying $F$ sequentially $t$ times to the initial configuration $\eta$. These definitions extend naturally to random $\eta$, and if $\eta$ follows the probability distribution $\mu$ (that belongs to $\mathbb{D}$), we let $F^{t} \mu$ (simply written $F\mu$ when $t=1$) denote the probability distribution of the (also random) configuration $F^{t}\eta$. 
\begin{defn}\label{ergodicity_defn}
We say that $\mu$ is a \emph{stationary} or \emph{invariant} measure for a PCA $F$ if $F \mu = \mu$ (in other words, the pushforward measure induced by $F$ is the same as the original measure). We call the PCA $F$ \emph{ergodic} if it has a \emph{unique} stationary measure, say $\mu$, which is \emph{attractive}, i.e.\ for \emph{every} probability measure $\nu$ on $\Omega$, the sequence $F^{t}\nu$ converges weakly to $\mu$ as $t \rightarrow \infty$.
\end{defn}

Consider $d = 1$. Given $y \in \mathbb{Z}$ and a configuration $\eta$, we define the configuration $T^{y} \eta$, with $T^{y}\eta(x) = \eta(x+y)$ for all $x \in \mathbb{Z}$, as the \emph{translation} or \emph{shift} of $\eta$ by $y$. We say that a probability measure $\mu$ belonging to $\mathbb{D}$ is \emph{translation-invariant} or \emph{shift-invariant} if for every subset $B$ measurable with respect to the $\sigma$-field $\mathcal{F}$ introduced earlier, and every $y \in \mathbb{Z}$, we have $\mu(B) = \mu(T^{y}B)$, where $T^{y}B = \{T^{y}\eta: \eta \in B\}$. Given a configuration $\eta$, we denote by $\eta^{R}$, with $\eta^{R}(x) = \eta(-x)$ for all $x \in \mathbb{Z}$, the \emph{reflection} of $\eta$. We say that a probability measure $\mu$ belonging to $\mathbb{D}$ is \emph{reflection-invariant} if for every subset $B$ measurable with respect to $\mathcal{F}$, we have $\mu(B) = \mu(B^{R})$, where $B^{R} = \{\eta^{R}: \eta \in B\}$. These notions will be of use to us in the sequel (see \S\ref{sec:weight_function}). 

We now come to a detailed description of the PCA $F_{p,q}$, with parameters $(p,q) \in \mathcal{S}$. This is a $1$-dimensional PCA, with alphabet $\mathcal{A} = \{0,1\}$ and neighbourhood $\mathcal{N} = \{0,1,2\}$, so that $F_{p,q} \eta(n)$ is a random variable whose probability distribution is a function of $\eta(n)$, $\eta(n+1)$ and $\eta(n+2)$ for all $n \in \mathbb{Z}$. More precisely, slighty tweaking the notation introduced in \eqref{general_update_rule_eq}, the stochastic matrix $\varphi_{p,q}: \mathcal{A}^{3} \times \mathcal{A} \rightarrow [0,1]$ for this PCA is defined via the equations: 
\begin{equation}\label{PCA_rule_1}
 \varphi_{p,q}(0, 0, 0, b) =
  \begin{cases} 
   p & \text{if } b = 0, \\
   1-p & \text{if } b = 1,
  \end{cases}
\end{equation}
and
\begin{equation}\label{PCA_rule_2}
 \varphi_{p,q}(a_{0}, a_{1}, a_{2}, b) =
  \begin{cases} 
   1-q & \text{if } b = 0, \\
   q & \text{if } b = 1,
  \end{cases} \quad \text{for all } (a_{0}, a_{1}, a_{2}) \in \mathcal{A}^{3} \setminus \{(0,0,0)\}.
\end{equation}
The (stochastic) update rule for the automaton $F_{p,q}$ can be illustrated pictorially via Figure~\ref{fig_2}. 

This PCA can be derived from two deterministic CAs, $F_{1}$ and $F_{2}$, whose respective local update rules, $f_{1}$ and $f_{2}$, are given by $f_{1}(a_{0},a_{1},a_{2}) = 1 - \max\{a_{0}, a_{1}, a_{2}\}$ and $f_{2}(a_{0}, a_{1}, a_{2}) = \max\{a_{0}, a_{1}, a_{2}\}$, for all $a_{0}, a_{1}, a_{2} \in \mathcal{A}$. If we define the stochastic matrix $\theta_{1}: \mathcal{A} \times \mathcal{A} \rightarrow [0,1]$ with $\theta_{1}(1,0) = p$, $\theta_{1}(1,1) = 1-p$, $\theta_{1}(0,0) = 1-q$ and $\theta_{1}(0,1) = q$, it is easily verified that $\varphi_{p,q}(a_{0}, a_{1}, a_{2}, b) = \theta_{1}(f_{1}(a_{0}, a_{1}, a_{2}), b)$ for all $a_{0}, a_{1}, a_{2}, b \in \mathcal{A}$. Writing $\theta_{1}(a,b) = (1-\epsilon)\delta_{a}(b) + \epsilon g(b)$ for $a, b \in \mathcal{A}$, where $\epsilon = p+q$, $\delta_{a}(b) = 1$ when $b = a$ and $\delta_{a}(b) = 0$ when $b \neq a$, and $g(0) = 1-g(1) = \frac{p}{p+q}$, we find, referring to \cite{marcovici_sablik_taati}, that our PCA $F_{p,q}$ is obtained from the CA $F_{1}$ via a memoryless zero-range noise. On the other hand, to obtain $F_{p,q}$ from $F_{2}$, we define the stochastic matrix $\theta_{2}: \mathcal{A} \times \mathcal{A} \rightarrow [0,1]$ with $\theta_{2}(0,0) = p$, $\theta_{2}(0,1) = 1-p$, $\theta_{2}(1, 0) = 1-q$ and $\theta_{2}(1,1) = q$, so that $\varphi_{p,q}(a_{0}, a_{1}, a_{2}, b) = \theta_{2}(f_{2}(a_{0}, a_{1}, a_{2}), b)$ for all $a_{0}, a_{1}, a_{2}, b \in \mathcal{A}$.  

\begin{figure}[h!]
  \centering
    \includegraphics[width=0.7\textwidth]{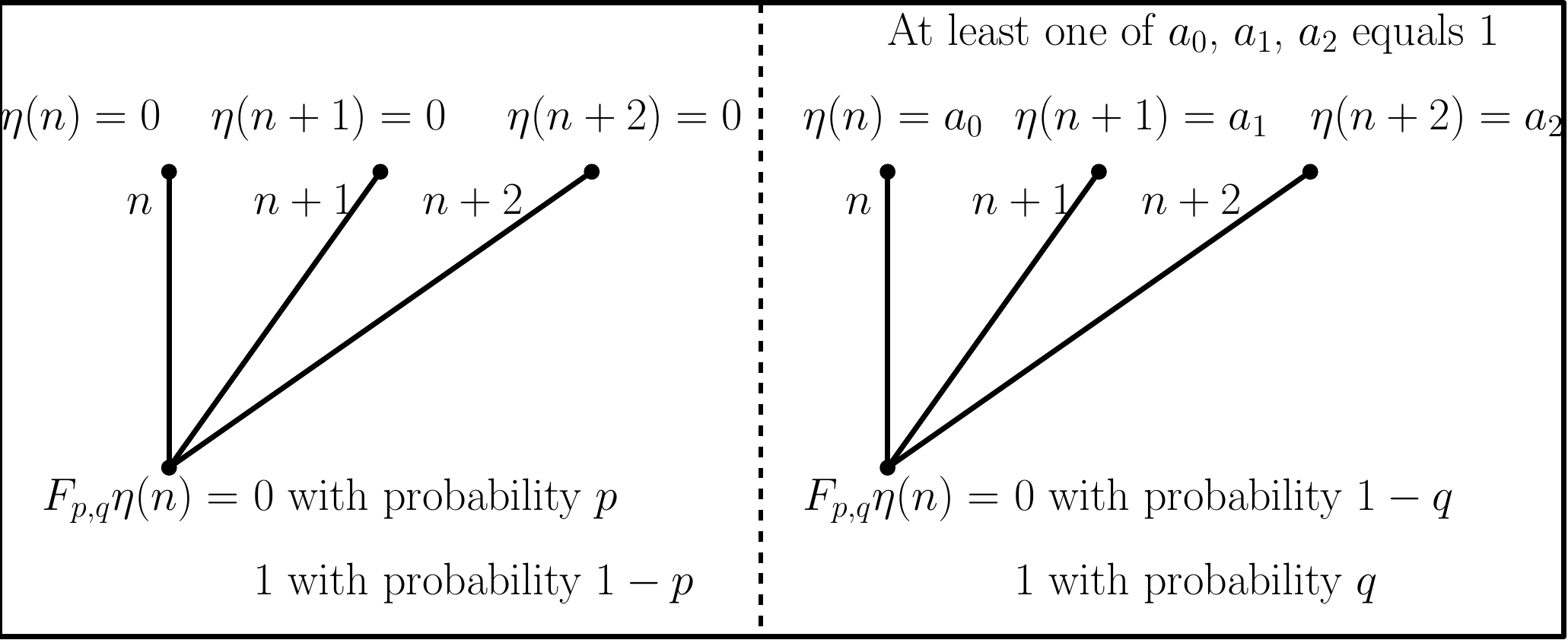}
\caption{The stochastic rules that define our PCA $F_{p,q}$}
  \label{fig_2}
\end{figure}

The PCA $A_{p,q}$ that is studied in \cite{holroyd2019percolation} is an elementary one that bears some resemblance with $F_{p,q}$ in that, following the notation in \eqref{general_update_rule_eq}, its stochastic matrix $\varphi: \mathcal{A}^{2} \times \mathcal{A}$ is defined by the equations:
\begin{equation}
 \varphi(0, 0, b) =
  \begin{cases} 
   p & \text{if } b = 0, \\
   1-p & \text{if } b = 1,
  \end{cases} \quad \text{and} \quad
 \varphi(a_{0}, a_{1}, b) =
  \begin{cases} 
   1-q & \text{if } b = 0, \\
   q & \text{if } b = 1,
  \end{cases} \quad \text{for all } (a_{0}, a_{1}) \in \mathcal{A}^{2} \setminus \{(0,0)\}.\nonumber
\end{equation}
We draw the reader's attention to the primary contrast between $F_{p,q}$ and $A_{p,q}$: whereas in $A_{p,q}$, we draw upon information regarding the states of $2$ consecutive sites $n$ and $n+1$ in order to decide the (random) updated state of the site $n$, in $F_{p,q}$, we draw upon information regarding the states of $3$ consecutive sites $n$, $n+1$ and $n+2$ to decide the (random) updated state of the site $n$. It is, therefore, expected that the latter will have a somewhat-more-involved underlying dependence structure, and establishing ergodicity results for $F_{p,q}$ indeed proves to be a considerably more challenging feat compared to that for $A_{p,q}$, at least as far as using the method of weight functions proposed and implemented in \cite{holroyd2019percolation} is concerned.  

Some discussion on how important the question of ergodicity (or the lack thereof) of PCAs is, and how difficult this question is to resolve under various circumstances, is now in order. It has been found to be notoriously difficult to construct a CA whose trajectories, starting from different initial conditions and evolving under repeated applications of the local update rule, remain distinguishable from each other if even the slightest positive noise is incorporated into the CA (for instance, see \cite{marcovici_sablik_taati}). In other words, most CAs tend to forget their initial conditions under the influence of even the smallest amount of noise, and this is reflected in the ergodicity of the resulting PCA. The renowned \emph{positive rates conjecture} states that \emph{all} PCAs defined on $\mathbb{Z}$ and satisfying $\varphi(a_{1}, a_{2}, \ldots, a_{|\mathcal{N}|}, b) > 0$ for all $a_{1}, a_{2}, \ldots, a_{|\mathcal{N}|}, b \in \mathcal{A}$ (referred to as the \emph{positive rates} condition) are ergodic. An extremely complicated example proposed by \cite{gacs} refutes this conjecture, but the fascinating question still remains as to whether all sufficiently simple, naturally occurring $1$-dimensional PCAs with positive rates are ergodic. There are, however, examples of $d$-dimensional PCAs for $d \geqslant 2$, such as Glauber dynamics for the Ising model at low temperatures, that are known to be non-ergodic. We note here that both $F_{p,q}$ and $A_{p,q}$ have positive rates as long as \emph{both} $p$ and $q$ are strictly positive. 

Multiple sources (see discussions in \cite{casse_markovici}, \cite{holroyd2019percolation} and \cite{bresler_guo_polyanskiy}) reiterate that in general, even if the answer can be guessed from heuristics or simulations, \emph{rigorously} proving whether a given PCA is ergodic or not is a notoriously difficult problem, and is shown to be algorithmically undecidable in \cite{buvsic} and \cite{PCA_survey_old}. Under the assumption of left-right symmetry (which guarantees $\varphi(1,0,0) = \varphi(0,1,0)$), an elementary PCA is determined by the parameters $\varphi(0,0,0)$, $\varphi(1,1,0)$ and $\varphi(1,0,0)$ (recall these notations from \eqref{general_update_rule_eq}). The many existing techniques that have been developed to study ergodicity can take care of ergodicity questions for such PCAs over more than $90\%$ of the volume of the cube $[0,1]^{3}$ defined by these $3$ parameters (\cite{PCA_survey_old}). However, when $p$ and $q$ are small, $A_{p,q}$ belongs to a domain of this cube where none of these techniques works, and this is where \cite{holroyd2019percolation} comes in with their brilliant idea of employing weight functions. To the best of our knowledge, the problem of establishing ergodicity results for $F_{p,q}$, for \emph{all} $(p,q) \in \mathcal{S}$, is an even more challenging one that has, so far, remained open, and we, in this paper, utilize the method of weight functions to provide a concrete proof of Theorem~\ref{thm:main_2}:

\subsection{Our envelope PCA and its relation to our percolation games}\label{subsec:envelope_PCA} We begin by deducing certain recurrence relations that arise naturally in each version of percolation games described in \S\ref{subsec:game}. Once $\mathbb{Z}^{2}$ has been endowed with the trap / target / open labeling, we define a site $(x,y)$ to be in the class $W$ if the game that begins with $(x,y)$ as the initial vertex is won by the player who plays the first round. We define $(x,y)$ to be in the class $L$ if the game that begins with $(x,y)$ as the initial vertex is lost by the player who plays the first round, and we define $(x,y)$ to be in the class $D$ if the game that begins with $(x,y)$ as the initial vertex results in a draw. In particular, if $(x,y)$ is a trap, then we place it in $W$, and if it is a target, we place it in $L$. The intuition behind these conventions is as follows: one may imagine an ``unseen" round that takes place \emph{before} the actual game begins, in which the player who is supposed to play the second round of the actual game moves the token from somewhere else to $(x,y)$. Thus, if $(x,y)$ is a trap (respectively a target), she loses (respectively wins) even before the game begins, implying that the player who plays the first round of the actual game wins (respectively loses).

For every $k \in \mathbb{Z}$, we denote by $D_{k} = \{(x,y) \in \mathbb{Z}^{2}: x+y = k\}$ the (top-left to bottom-right) diagonal line containing all sites whose coordinates sum to $k$, and by $H_{k} = \{(x,k): x \in \mathbb{Z}\}$ the horizontal line containing all sites whose $y$-coordinate equals $k$. These lines have been illustrated in red, according to their relevance in V1, V2 and V3, in Figure~\ref{fig_3}. We observe here that in version V1, if a game starts at an initial vertex $(x_{0},y_{0})$, the token \emph{at all times} stays on diagonals $D_{k}$ where $k$ has the same parity as $(x_{0}+y_{0})$.
\begin{figure}[h!]
  \centering
    \includegraphics[width=0.8\textwidth]{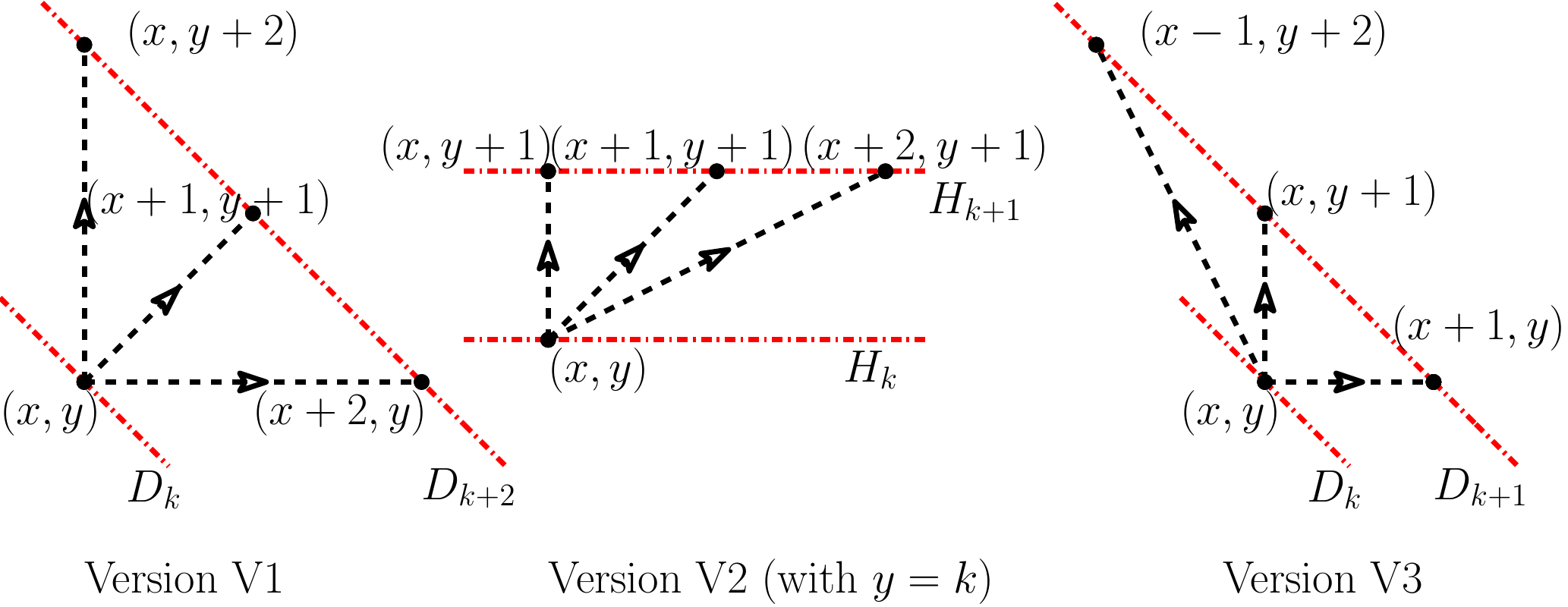}
\caption{Illustrating deduction of the recurrence relations}
  \label{fig_3}
\end{figure}

From the moves permitted in each of V1, V2 and V3, it follows that for any $k \in \mathbb{Z}$, the following are true:
\begin{enumerate}
\item in V1, if all sites $(x,y)$ that lie on the diagonal $D_{k+2}$ have already been categorized into the classes $W$, $L$ and $D$, then this information, along with the pre-assigned labels of trap / target / open, is enough to determine the classes to which the sites lying on $D_{k}$ belong,
\item in V2, if all sites $(x,y)$ that lie on the horizontal line $H_{k+1}$ have already been categorized into the classes $W$, $L$ and $D$, then this information, along with the pre-assigned labels of trap / target / open, is enough to determine the classes to which the sites lying on $H_{k}$ belong,
\item in V3, if all sites $(x,y)$ that lie on the diagonal $D_{k+1}$ have already been categorized into the classes $W$, $L$ and $D$, then this information, along with the pre-assigned labels of trap / target / open, is enough to determine the classes to which the sites lying on $D_{k}$ belong. 
\end{enumerate} 
Using Figure~\ref{fig_3} as a reference, we draw the following conclusions, assuming that $(x,y)$ is our initial vertex:
\begin{enumerate}
\item If each vertex of $\Out(x,y)$ belongs to $W$, then no matter which of these vertices the first player moves the token to from $(x,y)$, the second player wins. We thus have the following two possibilities: 
\begin{enumerate*}
\item either the vertex $(x,y)$ has been marked a trap and hence belongs to $W$, which happens with probability $p$, 
\item or else the game that begins from $(x,y)$ results in a loss for the first player, so that $(x,y)$ is classified into $L$ with the remaining probability $1-p$.
\end{enumerate*}
\item If at least one of the vertices of $\Out(x,y)$ belongs to $L$, the first player moves the token from $(x,y)$ to \emph{this} vertex, making the second player lose. We thus have the following two possibilities:
\begin{enumerate*}
\item either $(x,y)$ has been marked a target and hence belongs to $L$, which happens with probability $q$, 
\item or else the game that begins from $(x,y)$ results in a win for the first player, so that $(x,y)$ is classified into $W$ with the remaining probability $1-q$.
\end{enumerate*}
\item The final scenario is where \emph{none} of the vertices of $\Out(x,y)$ belongs to $L$ but at least one of them belongs to $D$. In this case, we have the following three possibilities:
\begin{enumerate*}
\item either $(x,y)$ has been marked a trap and hence belongs to $W$, which happens with probability $p$,
\item or $(x,y)$ has been marked a target and hence belongs to $L$, which happens with probability $q$, 
\item or else the game that begins from $(x,y)$ results in a draw, thus placing $(x,y)$ in the class $D$ with the remaining probability $r = 1-p-q$.
\end{enumerate*}
\end{enumerate} 
We further note that in version V1, given the classification of the vertices on $D_{k+2}$ into the classes $W$, $L$ and $D$, the (random) class which a vertex lying on $D_{k}$ gets sorted into via the above-mentioned rules is independent of all other vertices on $D_{k}$. Likewise, in version V2 (respectively V3), conditioned on the classification of the vertices on $H_{k+1}$ (respectively $D_{k+1}$) into the classes $W$, $L$ and $D$, the class which a vertex on $H_{k}$ (respectively $D_{k}$) gets sorted into is independent of all other vertices on $H_{k}$ (respectively $D_{k}$). 

The above recurrence relations are the key to establishing a connection between the games in \S\ref{subsec:game} and the envelope PCA $\F_{p,q}$ that we are now ready to define. Let us identify $W$ with the symbol $0$, $L$ with the symbol $1$ and $D$ with the symbol $?$ (in other words, we label a vertex $0$ if it has been classified into $W$, $1$ if it has been classified into $L$, and $?$ if it has been classified into $D$). For any $k \in \mathbb{Z}$, 
\begin{enumerate}
\item in V1, we identify the diagonal $D_{k}$ with the integer line $\mathbb{Z}$ by identifying $(x,k-x)$ on $D_{k}$ with $x$ on $\mathbb{Z}$ (alternatively, $(k-y,y)$ on $D_{k}$ may also be mapped to $y$ on $\mathbb{Z}$, yielding the same PCA),
\item in V2, we identify $H_{k}$ with $\mathbb{Z}$ by identifying $(x,k)$ on $H_{k}$ with $x$ on $\mathbb{Z}$,
\item in V3, we identify $D_{k}$ with $\mathbb{Z}$ by identifying $(k-y,y)$ on $D_{k}$ with $y$ on $\mathbb{Z}$ (see also Remark~\ref{rem:2-directional_PCA}).
\end{enumerate}
This allows us to represent the recurrence relations listed above via a PCA $\F_{p,q}$ that is endowed with the alphabet $\hat{\mathcal{A}} = \{0, 1, ?\}$, the neighbourhood $\mathcal{N} = \{0,1,2\}$, and the stochastic matrix $\widehat{\varphi}_{p,q}: \hat{\mathcal{A}}^{3} \times \hat{\mathcal{A}} \rightarrow [0,1]$ defined via the equations:
\begin{equation}\label{envelope_PCA_rule_1}
 \widehat{\varphi}_{p,q}(0, 0, 0, b) =
  \begin{cases} 
   p & \text{if } b = 0, \\
   1-p & \text{if } b = 1,
  \end{cases}
\end{equation}
\begin{equation}\label{envelope_PCA_rule_2}
 \widehat{\varphi}_{p,q}(a_{0}, a_{1}, a_{2}, b) =
  \begin{cases} 
   1-q & \text{if } b = 0, \\
   q & \text{if } b = 1,
  \end{cases} \quad \text{for all } (a_{0}, a_{1}, a_{2}) \in \hat{\mathcal{A}}^{3} \setminus \{0,?\}^{3}
\end{equation}
and
\begin{equation}\label{envelope_PCA_rule_3}
 \widehat{\varphi}_{p,q}(a_{0}, a_{1}, a_{2}, b) =
  \begin{cases} 
   p & \text{if } b = 0, \\
   q & \text{if } b = 1,\\
   r = 1-p-q & \text{if } b = ?,
  \end{cases} \quad \text{for all } (a_{0}, a_{1}, a_{2}) \in \{0,?\}^{3} \setminus (0,0,0).
\end{equation}

\begin{figure}[h!]
  \centering
    \includegraphics[width=0.7\textwidth]{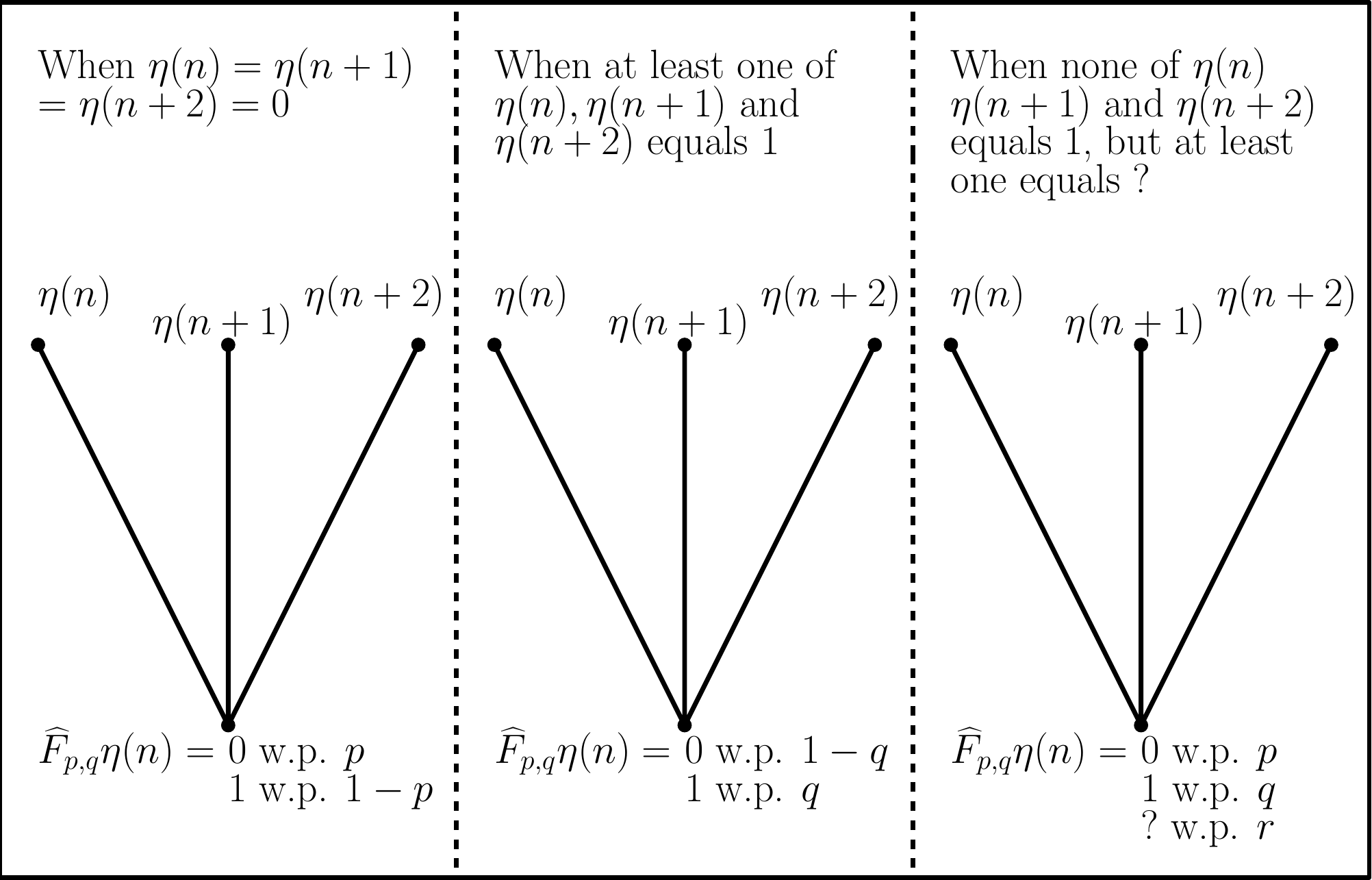}
\caption{The stochastic rules that define our envelope PCA $\F_{p,q}$}
  \label{fig_4}
\end{figure}
We illustrate $\widehat{\varphi}_{p,q}$ in Figure~\ref{fig_4}. To clarify further, in version V1, the classification of the vertices on $D_{k+2}$ into $W$, $L$ and $D$ yields a configuration $\eta \in \hat{\mathcal{A}}^{\mathbb{Z}}$ via the identifications described above, and the classification of the vertices on $D_{k}$ into $W$, $L$ and $D$ via the game's recurrence relations can then be represented by $\F_{p,q}\eta$. Likewise, in version V2 (respectively V3), the classification of the vertices on $H_{k+1}$ (respectively $D_{k+1}$) into $W$, $L$ and $D$ yields a configuration $\eta$, and the classification of the vertices on $H_{k}$ (respectively $D_{k}$) into $W$, $L$ and $D$ is then represented by $\F_{p,q}\eta$.

If we endow $\hat{\mathcal{A}}$ with the total order $0 \prec ? \prec 1$ and define $1-? = ?$, then (similar to the way we interpret $F_{p,q}$ in \S\ref{subsec:PCA}) the envelope PCA $\F_{p,q}$ can be derived, via random perturbations, from two CAs, $\F_{1}$ and $\F_{2}$, with respective local update rules $\f_{1}$ and $\f_{2}$ defined as $\f_{1}(a_{0},a_{1},a_{2}) = 1 - \max\{a_{0},a_{1},a_{2}\}$ and $\f_{2}(a_{0},a_{1},a_{2}) = \max\{a_{0},a_{1},a_{2}\}$. The stochastic matrix $\widehat{\theta}_{1}: \hat{\mathcal{A}} \times \hat{\mathcal{A}} \rightarrow [0,1]$ corresponding to $\F_{1}$ (again, see \cite{marcovici_sablik_taati} for notations) is defined as $\widehat{\theta}_{1}(0,0) = 1-q$, $\widehat{\theta}_{1}(0,1) = \widehat{\theta}_{1}(?,1) = q$, $\widehat{\theta}_{1}(1,0) = \widehat{\theta}_{1}(?,0) = p$, $\widehat{\theta}_{1}(1,1) = 1-p$, $\widehat{\theta}_{1}(?,?) = r$ and $\widehat{\theta}_{1}(0,?) = \widehat{\theta}_{1}(1,?) = 0$, and it is straightforward to check that $\F_{p,q}$ is obtained from $\F_{1}$ by injecting into it a suitable memoryless zero-range noise. The stochastic matrix $\widehat{\theta}_{2}: \hat{\mathcal{A}} \times \hat{\mathcal{A}} \rightarrow [0,1]$ corresponding to $\F_{2}$ is defined as $\widehat{\theta}_{2}(0,0) = \widehat{\theta}_{2}(?,0) = p$, $\widehat{\theta}_{2}(1,1) = \widehat{\theta}_{2}(?,1) = q$, $\widehat{\theta}_{2}(0,1) = 1-p$, $\widehat{\theta}_{2}(1,0) = 1-q$, $\widehat{\theta}_{2}(?,?) = r$ and $\widehat{\theta}_{2}(0,?) = \widehat{\theta}_{2}(1,?) = 0$.

Although we have argued in \S\ref{subsec:envelope_PCA} how the recurrence relations deduced from our percolation games give rise to $\F_{p,q}$, there is, in fact, another angle from which one may motivate the emergence of $\F_{p,q}$ (we mention here that the term ``envelope" was introduced in \cite{buvsic}) -- namely, the use of $\F_{p,q}$ in coupling two (random) configurations obtained via (possibly repeated) applications of the PCA $F_{p,q}$, starting from two different initial configurations. The symbol $?$ is utilized to populate sites whose actual values may differ between these two coupled configurations. 

It is now time to connect the principal components of this paper, namely the games, the PCA $F_{p,q}$, and its envelope $\F_{p,q}$, with one another. This connection is established via the following three results. 
\begin{prop}\label{prop:ergodicity_implies_draw_probab_0}
For every $(p,q) \in \mathcal{S}$, each of V1, V2 and V3, with parameters $p$ and $q$, has probability $0$ of culminating in a draw if and only if the PCA $F_{p,q}$ is ergodic.
\end{prop}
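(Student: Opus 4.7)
The approach pivots on the envelope $\F_{p,q}$ playing two complementary roles: it encodes the recurrence relations governing the game's $W/L/D$ classification, and it simultaneously couples trajectories of $F_{p,q}$, with the symbol $?$ tagging sites of disagreement.

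First I would check that, in each of V1, V2, V3, the joint law $\hat{\mu}$ on $\hat{\mathcal{A}}^{\mathbb{Z}}$ obtained by identifying $(W,L,D)$ with $(0,1,?)$ and reading off any line $D_{k}$ or $H_{k}$ is translation-invariant in $\mathbb{Z}$ and $\F_{p,q}$-stationary. This is direct from the conditional independence observation at the end of \S\ref{subsec:envelope_PCA} combined with the update rules \eqref{envelope_PCA_rule_1}--\eqref{envelope_PCA_rule_3}; by construction $\hat{\mu}(\eta(0) = ?)$ equals the probability that the game from $(0,0)$ draws.

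Second I would establish the envelope's key coupling property: given $\eta,\eta' \in \{0,1\}^{\mathbb{Z}}$ with overlay $\hat{\eta}(n) = \eta(n)$ when $\eta(n) = \eta'(n)$ and $\hat{\eta}(n) = ?$ otherwise, local randomness can be allocated so that $F_{p,q}\eta$, $F_{p,q}\eta'$ and $\F_{p,q}\hat{\eta}$ are realized jointly with $\F_{p,q}\hat{\eta}$ being the overlay of $(F_{p,q}\eta, F_{p,q}\eta')$. This reduces to a site-by-site case check based on \eqref{PCA_rule_1}--\eqref{PCA_rule_2} and \eqref{envelope_PCA_rule_1}--\eqref{envelope_PCA_rule_3}, the arithmetic of which uses $p + q \leqslant 1$ on $\mathcal{S}$. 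Iterating, $\F_{p,q}^{t}$ run from the all-$?$ configuration is a grand coupling of every pair of $F_{p,q}$-trajectories, so the probability of seeing $?$ at a given site under $\F_{p,q}^{t}\delta_{?}$ upper-bounds the disagreement probability at that site between any two coupled trajectories. A monotonicity argument on the overlay partial order then identifies $\hat{\mu}$ with $\lim_{t \to \infty} \F_{p,q}^{t} \delta_{?}$ and shows that $\hat{\mu}(\eta(0) = ?) = 0$ is equivalent to ergodicity of $F_{p,q}$: a vanishing $?$-density forces disagreements between any two $F_{p,q}$-trajectories to disappear weakly, yielding both uniqueness and attractiveness of a stationary measure; conversely, if $F_{p,q}$ is ergodic, the two extremal $\{0,1\}$-projections of $\hat{\mu}$ are both stationary for $F_{p,q}$ and hence equal, which forces $\hat{\mu}$ to assign no positive probability to $?$.

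Chaining these equivalences gives draw probability $= 0 \iff \hat{\mu}(\eta(0) = ?) = 0 \iff F_{p,q}$ is ergodic, which is exactly the claim. I expect the main obstacle to lie in making the identification $\hat{\mu} = \lim_{t \to \infty} \F_{p,q}^{t} \delta_{?}$ and its accompanying maximality statement rigorous: one must show that the game's classification on a line $D_{k}$ or $H_{k}$ is precisely the limit, as $N \to \infty$, of the $N$-step envelope recursion started from an all-$?$ boundary $N$ lines away, and that this limit dominates every other $\F_{p,q}$-stationary measure in the partial order where $?$ is maximal. Once this monotonicity / maximality is in place, the three-way equivalence above follows with no further work.
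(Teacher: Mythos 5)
Your overall skeleton --- the game classification law $\hat\mu$ is $\F_{p,q}$-stationary with $\hat\mu(?)$ equal to the draw probability, the envelope run from the all-$?$ configuration serves as a grand coupling dominating disagreements between $F_{p,q}$-trajectories, and the identification $\hat\mu=\lim_{t\to\infty}\F_{p,q}^{t}\delta_{?}$ via the $\unlhd$-monotonicity of Lemma~\ref{lem:2} --- is exactly the argument the paper appeals to (it defers to Proposition 2.2 of \cite{holroyd2019percolation}). However, the direction ``$F_{p,q}$ ergodic $\Rightarrow$ draw probability $0$'' as you state it rests on a false step: the two $\{0,1\}$-projections of $\hat\mu$ (send $?\mapsto 0$, resp.\ $?\mapsto 1$; in game terms, resolve infinite play in favour of, resp.\ against, the player to move) are \emph{not} stationary for $F_{p,q}$. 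Because the draw-resolution convention alternates between consecutive moves, the recurrences intertwine the two resolved classifications: writing $\mu^{+}$ and $\mu^{-}$ for the laws of the two projections on a line, one gets $F_{p,q}\mu^{-}=\mu^{+}$ and $F_{p,q}\mu^{+}=\mu^{-}$, so each is stationary only for $F_{p,q}^{2}$. The failure is visible locally: on a neighbourhood in $\widehat{***}$ the envelope output projected by $?\mapsto 0$ equals $0$ with probability $p+r$, whereas $F_{p,q}$ applied to the projected (all-$0$) neighbourhood gives $0$ with probability $p$. Moreover $\mu^{+}=\mu^{-}$ holds precisely when $\hat\mu(?)=0$, so asserting their stationarity amounts to assuming the conclusion. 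The repair is short: since $F_{p,q}^{2}\mu^{\pm}=\mu^{\pm}$, ergodicity (attractiveness towards the unique stationary $\pi$) gives $\mu^{+}=\lim_{t}F_{p,q}^{2t}\mu^{+}=\pi=\mu^{-}$, and equality of the two projections forces $\hat\mu(?)=0$; alternatively, invoke Proposition~\ref{prop:ergodicity_equivalence}: if $F_{p,q}$ is ergodic then so is $\F_{p,q}$, its unique stationary measure coincides with any $F_{p,q}$-stationary measure (which exists by compactness, is automatically $\F_{p,q}$-stationary, and charges no $?$), and since $\hat\mu$ is $\F_{p,q}$-stationary it assigns no mass to $?$.

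A secondary inaccuracy: you cannot in general allocate the randomness so that $\F_{p,q}\hat\eta$ is \emph{exactly} the overlay of $(F_{p,q}\eta,F_{p,q}\eta')$. For instance, if the neighbourhoods of $\eta$ and $\eta'$ are $(1,0,0)$ and $(0,1,0)$, both updates have the same law (a $1$ with probability $q$), while the envelope neighbourhood $(?,?,0)$ outputs $?$ with probability $r$; no coupling with the correct three marginals makes the disagreement event coincide with the $?$ event. Only the one-sided statement holds --- wherever the envelope shows a $0$ or a $1$, both coupled trajectories show that symbol --- and this domination is all your argument actually uses, so this slip, unlike the one above, does not affect the structure of the proof.
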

\begin{prop}\label{prop:ergodicity_equivalence}
The PCA $F_{p,q}$ is ergodic if and only if the corresponding envelope PCA $\F_{p,q}$ is ergodic.
\end{prop}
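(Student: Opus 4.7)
Both directions of the proof rest on two structural facts about $\F_{p,q}$ vis-\`a-vis $F_{p,q}$, both immediate by comparing \eqref{PCA_rule_1}--\eqref{PCA_rule_2} with \eqref{envelope_PCA_rule_1}--\eqref{envelope_PCA_rule_3}. First, $\{0,1\}^{\mathbb{Z}}\subset\hat{\mathcal{A}}^{\mathbb{Z}}$ is forward-invariant under $\F_{p,q}$ --- rules \eqref{envelope_PCA_rule_1} and \eqref{envelope_PCA_rule_2} never output $?$, and the hypothesis of \eqref{envelope_PCA_rule_3} fails on every $?$-free neighborhood --- and on this invariant subset $\F_{p,q}$ coincides with $F_{p,q}$. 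Second, $?$-production is localized: \eqref{envelope_PCA_rule_3} applies only on neighborhoods in $\{0,?\}^{3}\setminus(0,0,0)$, outputting $?$ with probability $r=1-p-q$, so the presence of any $1$ in the neighborhood forbids the output $?$.

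The direction ``$\F_{p,q}$ ergodic $\Rightarrow F_{p,q}$ ergodic'' follows immediately from the first fact: any $F_{p,q}$-stationary measure (existing by compactness of $\{0,1\}^{\mathbb{Z}}$ and continuity of $F_{p,q}$) is $\F_{p,q}$-stationary when viewed in $\hat{\mathcal{A}}^{\mathbb{Z}}$, so uniqueness of $\F_{p,q}$'s stationary measure forces uniqueness for $F_{p,q}$. Attractiveness transfers analogously: for any probability measure $\nu$ on $\{0,1\}^{\mathbb{Z}}$, $F_{p,q}^{t}\nu=\F_{p,q}^{t}\nu$ converges weakly to the unique $\F_{p,q}$-stationary measure $\widehat{\mu}$, which must be supported on $\{0,1\}^{\mathbb{Z}}$ as the weak limit of measures supported there.

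For the converse, let $\mu$ be the unique attractive $F_{p,q}$-stationary measure, viewed in $\hat{\mathcal{A}}^{\mathbb{Z}}$; it is $\F_{p,q}$-stationary by the first structural fact. The essential task is to show $\F_{p,q}^{t}\widehat{\nu}\Rightarrow\mu$ for every probability measure $\widehat{\nu}$ on $\hat{\mathcal{A}}^{\mathbb{Z}}$, whence uniqueness follows in the standard way. I would exploit the monotonicity of $\F_{p,q}$ under the partial order on $\hat{\mathcal{A}}$ that places $?$ as the top element (with $0,1$ incomparable): the all-$?$ initial configuration $\delta_{?^{\mathbb{Z}}}$ dominates every $\widehat{\nu}$ in this order, so it suffices to show $\F_{p,q}^{t}\delta_{?^{\mathbb{Z}}}\Rightarrow\mu$. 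Under the coupling-from-the-past interpretation of the envelope, a site $n$ is labelled $?$ under $\F_{p,q}^{t}\delta_{?^{\mathbb{Z}}}$ precisely when the value $F_{p,q}^{t}\eta(n)$, with $\eta\in\{0,1\}^{\mathbb{Z}}$ evolved under the synchronous coupling implicit in \eqref{PCA_rule_1}--\eqref{PCA_rule_2}, genuinely depends on the initial condition $\eta$. I would then argue that ergodicity of $F_{p,q}$ forces this ambiguity to decay as $t\to\infty$, yielding the desired convergence.

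The main obstacle is rigorously executing this last step: ergodicity of $F_{p,q}$ supplies only marginal convergence $F_{p,q}^{t}\eta\Rightarrow\mu$ for each fixed $\eta$, whereas the $?$-probability under $\F_{p,q}^{t}\delta_{?^{\mathbb{Z}}}$ demands joint control across all $\eta$ simultaneously. To bridge this gap I would use the second structural fact --- $?$-survival is confined to clusters of $\{0,?\}$-sites, which break apart as soon as a $1$ appears in the local neighborhood --- combined with the observation that stationarity of $\mu$ under $F_{p,q}$ together with rule \eqref{PCA_rule_2} forces $\mu(\eta(n)=1)\geq q$ by a short calculation, so $1$'s appear with positive density at each fixed site whenever $q>0$, breaking persistent $\{0,?\}$-clusters. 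The boundary case $q=0$ (with $p>0$) is handled by a separate short argument using the restricted form that \eqref{envelope_PCA_rule_3} takes at $q=0$. Should these direct monotone-domination arguments prove too delicate, the general PCA envelope-ergodicity correspondences developed in \cite{buvsic} and \cite{bresler_guo_polyanskiy} apply and can be invoked as a black box, since $F_{p,q}$ satisfies their standing hypotheses for every $(p,q)\in\mathcal{S}$.
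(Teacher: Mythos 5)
Your first direction ($\F_{p,q}$ ergodic $\Rightarrow$ $F_{p,q}$ ergodic) is fine and is essentially the standard argument. The gap is in the converse, which is exactly the direction you flag as the ``main obstacle'': you reduce correctly to showing that the $?$-density under $\F_{p,q}^{t}$ started from the all-$?$ configuration vanishes, but neither of your proposed bridges closes this. The cluster-breaking/positive-density-of-$1$'s argument does not actually use the hypothesis that $F_{p,q}$ is ergodic in any essential, quantitative way: the facts that a $1$ in the neighbourhood forbids the output $?$ and that $1$'s are produced with probability at least $q$ hold unconditionally, so if they sufficed to kill the $?$'s from the all-$?$ start, you would have proved Theorem~\ref{thm:main_3} outright and the entire weight-function machinery of \S\ref{sec:weight_function} would be unnecessary; the persistence of $?$'s inside long $\{0,?\}$-blocks is precisely the difficulty there, and it is not dispatched by a ``short calculation.'' The fallback of citing \cite{buvsic} or \cite{bresler_guo_polyanskiy} as a black box is also unavailable: for general PCAs, ergodicity of the original automaton does \emph{not} imply ergodicity of its envelope (for an additive rule, e.g.\ a noisy XOR of neighbours, the PCA is ergodic while the envelope fixes the all-$?$ configuration, since $?$ XOR anything is $?$); envelope ergodicity is in general only a sufficient condition, so the converse must come from structure specific to $\F_{p,q}$.

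That missing structural ingredient is Lemma~\ref{lem:1}, which is exactly what the paper (following the argument of Proposition 2.1 of \cite{holroyd2019percolation}) uses to close the gap you name. Since $\F_{p,q}$ \emph{reverses} stochastic domination for the total order $0 \prec ? \prec 1$, and since the all-$0$ and all-$1$ configurations sandwich the all-$?$ configuration in this order while $\F_{p,q}$ agrees with $F_{p,q}$ on $?$-free configurations, an even number of applications gives $F_{p,q}^{2t}\mu_{0} \preceq \F_{p,q}^{2t}\mu_{?} \preceq F_{p,q}^{2t}\mu_{1}$, where $\mu_{0}, \mu_{?}, \mu_{1}$ denote the point masses at the all-$0$, all-$?$, all-$1$ configurations. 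Hence the probability that site $x$ reads $?$ at time $2t$ is at most $\Prob[F_{p,q}^{2t}\mu_{1}(x) = 1] - \Prob[F_{p,q}^{2t}\mu_{0}(x) = 1]$, and ergodicity of $F_{p,q}$ makes this difference tend to $0$ because both extremal chains converge weakly to the same measure --- marginal convergence of two specific chains is all that is needed, which is precisely how the ``joint control over all initial conditions'' issue dissolves. Combined with the Lemma~\ref{lem:2} domination $\F_{p,q}^{t}\widehat{\nu} \unlhd \F_{p,q}^{t}\mu_{?}$ (valid since the all-$?$ configuration is maximal for $\unlhd$), this kills the $?$-density for every initial law and every invariant law, after which the identification of the unique invariant measure and the attractiveness statement go through as you outline. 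Without Lemma~\ref{lem:1}, or some substitute for this sandwich, your converse direction remains unproved.
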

\begin{theorem}\label{thm:main_3}
For each $(p,q) \in \mathcal{S}$, the envelope PCA $\F_{p,q}$ admits no stationary distribution $\mu$ that assigns positive probability to the symbol $?$. To put it formally, the probability of the event $\eta(x) = ?$, where $\eta$ is a random configuration with law $\mu$, is $0$ for every $x \in \mathbb{Z}$.
\end{theorem}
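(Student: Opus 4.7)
The plan is to establish Theorem~\ref{thm:main_3} by the weight function (or potential function) method introduced in \cite{holroyd2019percolation}, now adapted to the size-$3$ neighbourhood of $\F_{p,q}$. Suppose, for contradiction, that some stationary distribution $\mu$ for $\F_{p,q}$ satisfies $\mu(\eta(x_0) = \,?) > 0$ for some $x_0 \in \mathbb{Z}$. Because $\F_{p,q}$ commutes with spatial shifts, the pushforward $T^{x_0}\mu$ is stationary with $(T^{x_0}\mu)(\eta(0) = \,?) > 0$, so I may assume $x_0 = 0$; by replacing $\mu$ with a weak-$\ast$ accumulation point of its Ces\`aro shift averages one may further assume $\mu$ is shift-invariant, since the space of stationary measures is closed under this averaging.

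The next step is to introduce a weight function $w \colon \hat{\mathcal{A}}^{n} \to [0,\infty)$ on length-$n$ cylinders, where $n = 5$ is the natural choice suggested by the structure of \S\ref{subsec:cylinder_computations}--\S\ref{subsec:compose_5}: a single step of $\F_{p,q}$ propagates dependence across two additional sites, so patterns of length $5$ are the smallest windows that interact cleanly with one PCA step. Define the average weight per site
\[
W(\mu) \;=\; \sum_{\sigma \in \hat{\mathcal{A}}^{n}} w(\sigma)\,\mu\bigl(\eta|_{[0,n-1]} = \sigma\bigr),
\]
which is well-defined by shift-invariance. The goal is to construct $w$ so that the drift
\[
\Delta(\mu) \;=\; W(\F_{p,q}\mu) - W(\mu)
\]
satisfies $\Delta(\mu) \leqslant -c\,\mu(\eta(0) = \,?)$ for some constant $c = c(p,q) > 0$. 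Since stationarity forces $\Delta(\mu) = 0$, this inequality immediately yields $\mu(\eta(0) = \,?) = 0$, giving the desired contradiction.

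To carry out the drift computation I would expand $(\F_{p,q}\mu)(\eta|_{[0,n-1]} = \sigma)$ as a sum over pre-image patterns of length $n+2$, using the independence of site updates together with the local update probabilities from \eqref{envelope_PCA_rule_1}--\eqref{envelope_PCA_rule_3}. This rewrites $\Delta(\mu)$ as a linear combination $\sum_{\tau \in \hat{\mathcal{A}}^{n+2}} C_{\tau}(p,q; w)\,\mu(\eta|_{[-1,n]} = \tau)$ whose coefficients $C_{\tau}$ are polynomials in $p, q$ and the weight values. The task then reduces to choosing $w \geqslant 0$ so that every $C_{\tau} \leqslant 0$, with a strict uniform negative bound on those $\tau$ whose central symbol is $?$. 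A case analysis by the content of $\tau$ (the all-zeros pattern; patterns containing a $1$; and non-zero patterns in $\{0,?\}^{n+2}$) aligns with the three regimes of the update rule and makes the verification modular.

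The hard part will be the explicit construction of $w$. The size-$3$ neighbourhood substantially increases the number of cylinder types and preimages compared to the size-$2$ case treated in \cite{holroyd2019percolation}, and the resulting system of polynomial inequalities in $p$ and $q$ must be satisfied uniformly over the entire region $\mathcal{S}$. I anticipate exploiting the reflection-invariance of the update rule, together with its insensitivity to permutations of the neighbourhood entries, to reduce the effective number of independent weight coefficients. Once a workable $w$ is produced, the remaining verification of the drift inequality is bounded but laborious, and this is precisely the content advertised for the subsections \S\ref{subsec:cylinder_computations}--\S\ref{subsec:compose_5}.
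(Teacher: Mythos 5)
Your overall strategy --- a nonnegative weight functional whose drift under $\F_{p,q}$ forces $\mu(?)=0$ at stationarity --- is the same as the paper's, but as written there are two genuine gaps. First, the reduction step fails: a weak-$\ast$ limit of Ces\`aro shift averages of $\mu$ is indeed stationary, but it need not retain the hypothesis $\mu(\eta(0)=?)>0$, because the averaged quantity $\frac{1}{N}\sum_{k}\mu(\eta(k)=?)$ can tend to $0$ when the $?$-probability of the original (non-translation-invariant) $\mu$ is carried by few sites. The paper instead reduces to translation- \emph{and} reflection-invariant stationary measures via the stochastic-domination argument of Proposition 2.3 of \cite{holroyd2019percolation}, using the monotonicity of $\F_{p,q}$ with respect to the partial order $0 \lhd {?} \rhd 1$ (Lemma~\ref{lem:2} and Remark~\ref{rem:shift_reflection_invariant_suffices}); some comparison argument of this kind is needed, not Ces\`aro averaging.

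Second, and more fundamentally, the proposal defers precisely the content that constitutes the proof: no weight function is constructed and no drift inequality is verified. The paper's argument is not a generic ansatz over length-$5$ cylinders solved as a family of polynomial inequalities; it is an explicit multi-stage construction --- the initial choice \eqref{initial_weight}, followed by the adjustments \eqref{adjust_1}, \eqref{adjust_2}, \eqref{adjust_3} and a final one, yielding \eqref{final_weight} --- supported by the cylinder-probability computations of \S\ref{subsec:cylinder_computations} and the combinatorial inequalities derived from Tables~\ref{table_1}--\ref{table_4}, with every coefficient's sign checked over all of $\mathcal{S}$. Moreover, your target bound $\Delta(\mu)\leqslant -c\,\mu(\eta(0)=?)$ with $c(p,q)>0$ uniform is stronger than what the paper's weight function delivers: in the final inequality the coefficient controlling $\mu(\widehat{***})$ (and hence $\mu(?)=r\mu(\widehat{***})$) degenerates when $q=0$, and in that case the conclusion requires a separate argument --- from the vanishing of the terms one first gets $\mu(10?)=\mu(000?1)=0$, hence $\mu(000?)=0$, and only then, by computing $\F_{p,q}\mu(000?)$, does one force $\mu(\widehat{***})=0$ (see \S\ref{subsec:compose_5}); the case $p+q=1$ is also handled separately. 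Without producing a concrete $w$ and carrying out these verifications, including the boundary cases, the proposal remains a plan rather than a proof.
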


\begin{remark}\label{rem:2-directional_PCA}
Referring to Figure~\ref{fig_3}, we find that in V3, for each $k \in \mathbb{Z}$, the diagonal $D_{k}$ may also be identified with $\mathbb{Z}$ by mapping $(x,k-x)$ on $D_{k}$ to $x$ on $\mathbb{Z}$. This, then, yields a PCA $\widehat{G}_{p,q}$ with alphabet $\hat{\mathcal{A}}$, neighbourhood $\hat{\mathcal{N}} = \{-1,0,1\}$, and the same (stochastic) local update rule $\widehat{\varphi}_{p,q}$ as defined in \eqref{envelope_PCA_rule_1}, \eqref{envelope_PCA_rule_2} and \eqref{envelope_PCA_rule_3}. In other words, given a configuration $\eta$, the probability distribution of $\widehat{G}_{p,q}\eta(n)$, for any $n \in \mathbb{Z}$, is obtained by letting $\widehat{\varphi}_{p,q}$ take $\eta(n-1)$, $\eta(n)$ and $\eta(n+1)$ as its arguments, as opposed to $\eta(n)$, $\eta(n+1)$ and $\eta(n+2)$. Likewise, the PCA $G_{p,q}$ with alphabet $\mathcal{A} = \{0,1\}$, whose envelope is $\widehat{G}_{p,q}$, is obtained by letting $\varphi_{p,q}$ take $\eta(n-1)$, $\eta(n)$ and $\eta(n+1)$ as its arguments (recall \eqref{PCA_rule_1} and \eqref{PCA_rule_2}). It is straightforward to see that $\widehat{G}_{p,q}$ may also be derived from recurrence relations arising from a fourth version of percolation games, V4, in which $\Out(x,y) = \{(x-1,y+1), (x,y+1), (x+1,y+1)\}$ for each $(x,y) \in \mathbb{Z}^{2}$. 
\end{remark}

Remark~\ref{rem:2-directional_PCA} and results analogous to Proposition~\ref{prop:ergodicity_implies_draw_probab_0} together let us conclude that the probability of draw in each of V3 and V4 is $0$ if and only if $G_{p,q}$ is ergodic. This, in conjunction with Theorem~\ref{thm:main_1}, yields
\begin{theorem}\label{thm:main_4}
Each of $G_{p,q}$ and $\widehat{G}_{p,q}$ is ergodic, and the probability of draw in V4 is $0$, for all $(p,q) \in \mathcal{S}$. 
\end{theorem}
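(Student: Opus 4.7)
The strategy is to lift the entire chain of reasoning that proves Theorems~\ref{thm:main_1} and~\ref{thm:main_2} to the setting of $G_{p,q}$ and $\widehat{G}_{p,q}$, using Remark~\ref{rem:2-directional_PCA} and Theorem~\ref{thm:main_1} itself as inputs. The argument has two ingredients beyond what is already in the paper, and matters have been arranged so that only bookkeeping separates us from each.

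First, I would prove the analogue of Proposition~\ref{prop:ergodicity_implies_draw_probab_0} asserting that \emph{the probability of draw in each of V3 and V4 equals $0$ if and only if $G_{p,q}$ is ergodic.} For V4, whose permitted moves are $\Out(x,y) = \{(x-1,y+1), (x,y+1), (x+1,y+1)\}$, the recurrence that sorts the vertices of $H_{k}$ into $W$, $L$, $D$ from the classification of $H_{k+1}$ is structurally the V2 recurrence with neighbourhood $\{-1,0,1\}$ in place of $\{0,1,2\}$; identifying $H_{k}$ with $\mathbb{Z}$ via $(x,k) \mapsto x$ realises this recurrence as $\widehat{G}_{p,q}$, exactly as anticipated at the end of Remark~\ref{rem:2-directional_PCA}. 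For V3, the alternative identification $(x,k-x) \mapsto x$ in the same remark recasts the V3 recurrence as $\widehat{G}_{p,q}$ rather than $\F_{p,q}$. The proof of Proposition~\ref{prop:ergodicity_implies_draw_probab_0} reruns essentially verbatim, since its steps use only the bijection between rows/diagonals and $\mathbb{Z}$, the product form of the conditional update, and the draw-to-$?$ dictionary---none of which cares whether the three-site window is $\{0,1,2\}$ or $\{-1,0,1\}$.

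Second, I would prove the analogue of Proposition~\ref{prop:ergodicity_equivalence}: $G_{p,q}$ is ergodic if and only if $\widehat{G}_{p,q}$ is ergodic. The proof of Proposition~\ref{prop:ergodicity_equivalence} links $F_{p,q}$ and $\F_{p,q}$ through the natural surjection $\{0,?,1\} \to \{0,1\}$ that symbol-wise resolves $?$ to either $0$ or $1$, and nowhere does the specific placement of the neighbourhood enter. Consequently the same proof furnishes the desired equivalence between $G_{p,q}$ and $\widehat{G}_{p,q}$.

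With these two ingredients in hand, the theorem closes quickly. By Theorem~\ref{thm:main_1}, V3 has probability $0$ of drawing; the first analogue then forces $G_{p,q}$ to be ergodic; the same analogue, read in the other direction for V4, yields draw probability $0$ in V4; finally the second analogue produces ergodicity of $\widehat{G}_{p,q}$. I expect the only genuine work to be in the first component---in particular, the careful verification that the backward-in-time coupling used in the proof of Proposition~\ref{prop:ergodicity_implies_draw_probab_0} remains valid after the neighbourhood relabelling $\{0,1,2\} \rightsquigarrow \{-1,0,1\}$, and that no step tacitly relied on the original one-sided geometry. Everything else is purely formal.
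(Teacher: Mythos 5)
Your proposal is correct and follows essentially the same route as the paper: the paper derives Theorem~\ref{thm:main_4} precisely by combining Remark~\ref{rem:2-directional_PCA} with the analogues of Propositions~\ref{prop:ergodicity_implies_draw_probab_0} and \ref{prop:ergodicity_equivalence} for the neighbourhood $\{-1,0,1\}$, and then invoking the draw-probability-zero conclusion of Theorem~\ref{thm:main_1} for V3 to get ergodicity of $G_{p,q}$, hence draw probability $0$ in V4 and ergodicity of $\widehat{G}_{p,q}$. Your observation that none of the arguments in those propositions depends on the specific placement of the three-site window is exactly the point the paper relies on.
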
  

Although we do not elaborate on the proof, it is actually fairly straightforward to verify, implementing the methods and arguments adopted in this paper, that the following is true. For $(p,q) \in \mathcal{S}$ and any $i \in \mathbb{Z}$, let $G^{(i)}_{p,q}$ be the PCA with alphabet $\mathcal{A} = \{0,1\}$, neighbourhood $\mathcal{N}^{(i)} = \{i,i+1,i+2\}$ and stochastic local update rule $\varphi_{p,q}$ as defined in \eqref{PCA_rule_1} and \eqref{PCA_rule_2}. The corresponding envelope PCA $\widehat{G^{(i)}}_{p,q}$ has alphabet $\hat{\mathcal{A}} = \{0,?,1\}$, neighbourhood $\mathcal{N}^{(i)}$ and stochastic local update rule $\widehat{\varphi}_{p,q}$ as defined in \eqref{envelope_PCA_rule_1}, \eqref{envelope_PCA_rule_2} and \eqref{envelope_PCA_rule_3}. Let $U$ and $V$ be percolation games on $\mathbb{Z}^{2}$, similar to the versions described in \S\ref{subsec:game}, with parameters $p$ and $q$ and with $\Out_{U}(x,y) = \{(x+i,y+1), (x+i+1,y+1), (x+i+2,y+1)\}$ and $\Out_{V}(x,y) = \{(x+i,y-i+1), (x+i+1,y-i), (x+i+2,y-i-1)\}$ respectively. Then both $G^{(i)}_{p,q}$ and $\widehat{G^{(i)}}_{p,q}$ are ergodic, and the probability of draw in each of $U$ and $V$ is $0$, for all $(p,q) \in \mathcal{S}$.

\section{A couple of lemmas before we embark on a proof of Theorem~\ref{thm:main_3}}\label{sec:prelim_results}
Recall from \S\ref{subsec:envelope_PCA} that $\hat{\mathcal{A}} = \{0,?,1\}$, and borrowing from the definitions in \S\ref{subsec:PCA}, we let $\mathbb{D}$ denote the set of all probability measures on $\Omega = \hat{\mathcal{A}}^{\mathbb{Z}}$ that are defined with respect to the $\sigma$-field $\mathcal{F}$ generated by the cylinder sets of $\Omega$.
\begin{lemma}\label{lem:1}
Let $\mu$ and $\widetilde{\mu}$ be two probability distributions in $\mathbb{D}$. Let $\preceq$ denote the stochastic domination (on $\mathbb{D}$) with respect to the coordinate-wise total order induced by the ordering $0 \prec ? \prec 1$, and let $\mu \preceq \widetilde{\mu}$. Then $\F_{p,q}\widetilde{\mu} \preceq \F_{p,q}\mu$.
\end{lemma}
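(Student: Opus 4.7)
My plan is to prove Lemma~\ref{lem:1} via a two-stage coupling argument built on Strassen's theorem and a pointwise anti-monotonicity check of the local update rule $\widehat{\varphi}_{p,q}$.

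First I would invoke Strassen's theorem on the compact metric space $\Omega = \hat{\mathcal{A}}^{\mathbb{Z}}$, equipped with the coordinate-wise partial order induced by $0 \prec ? \prec 1$. Since $\mu \preceq \widetilde{\mu}$, this produces a coupling $\pi$ on $\Omega \times \Omega$ with marginals $\mu$ and $\widetilde{\mu}$ such that, under $\pi$, a pair $(\eta,\widetilde{\eta})$ satisfies $\eta(n) \preceq \widetilde{\eta}(n)$ for every $n \in \mathbb{Z}$, almost surely.

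The central step would be to establish the following single-site anti-monotonicity: whenever $(a_0,a_1,a_2), (\widetilde{a}_0,\widetilde{a}_1,\widetilde{a}_2) \in \hat{\mathcal{A}}^3$ satisfy $a_i \preceq \widetilde{a}_i$ for $i \in \{0,1,2\}$, the probability measure $\widehat{\varphi}_{p,q}(\widetilde{a}_0,\widetilde{a}_1,\widetilde{a}_2, \cdot)$ is stochastically dominated by $\widehat{\varphi}_{p,q}(a_0,a_1,a_2, \cdot)$ as distributions on $\hat{\mathcal{A}}$. The rules \eqref{envelope_PCA_rule_1}, \eqref{envelope_PCA_rule_2} and \eqref{envelope_PCA_rule_3} partition the input triples into three classes---class A consisting of $(0,0,0)$ alone, yielding output distribution $(p, 0, 1-p)$ on $(0,?,1)$; class B consisting of $\{0,?\}^3 \setminus (0,0,0)$, yielding $(p, r, q)$; and class C consisting of triples containing at least one $1$, yielding $(1-q, 0, q)$---and any coordinate-wise increase of input triples moves from class A upward through B to C. Comparing the tail probabilities $\Prob[\,\cdot \succeq ?\,]$ and $\Prob[\,\cdot \succeq 1\,]$ across the three laws, every required chain inequality (A dominates B, B dominates C, A dominates C) reduces to $q \leq 1-p$, which holds by the very definition of $\mathcal{S}$.

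With the local comparison in hand, I would build a site-wise coupling of the updates: conditional on $(\eta, \widetilde{\eta})$, apply Strassen's theorem on the finite poset $(\hat{\mathcal{A}}, \prec)$ for each $n \in \mathbb{Z}$ to obtain a coupling $\kappa_n$ of the two output laws supported on pairs $(x,\widetilde{x})$ with $\widetilde{x} \preceq x$, and then take the family $\{\kappa_n\}_{n \in \mathbb{Z}}$ conditionally independent across $n$, which matches the PCA's prescription that sites update independently. The resulting joint law of $(\F_{p,q}\eta, \F_{p,q}\widetilde{\eta})$ has marginals $\F_{p,q}\mu$ and $\F_{p,q}\widetilde{\mu}$ and satisfies $\F_{p,q}\widetilde{\eta}(n) \preceq \F_{p,q}\eta(n)$ for every $n$ almost surely; a final, reverse invocation of Strassen's theorem then yields $\F_{p,q}\widetilde{\mu} \preceq \F_{p,q}\mu$. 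The main conceptual point to keep straight is that $\widehat{\varphi}_{p,q}$ is \emph{anti}-monotone rather than monotone---an artifact of the ``flip'' in \eqref{envelope_PCA_rule_1}, which sends the minimal input $(0,0,0)$ to a law heavily weighted on the maximal symbol $1$---so the direction of domination reverses under a single application of $\F_{p,q}$; beyond this, the sole analytic input is the inequality $p+q \leq 1$.
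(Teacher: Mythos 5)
Your proposal is correct and follows essentially the same route as the paper's proof: a single-site check that coordinate-wise larger input triples yield stochastically smaller output laws under $\widehat{\varphi}_{p,q}$ (with every comparison reducing to $q \leqslant 1-p$), followed by a monotone coupling of $\mu$ and $\widetilde{\mu}$ and independent site-wise coupling of the updates. Your organization into the three input classes with monotone class membership, and the explicit appeal to Strassen's theorem, is simply a tidier packaging of the paper's case-by-case verification and its implicit coupling step.
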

\begin{proof}
Instead of starting with two random configurations, let us fix configurations $\eta$ and $\widetilde{\eta}$ in $\hat{\mathcal{A}}^{\mathbb{Z}}$ such that $\eta(x) \preceq \widetilde{\eta}(x)$ for all $x \in \mathbb{Z}$. Using \eqref{envelope_PCA_rule_1}, \eqref{envelope_PCA_rule_2} and \eqref{envelope_PCA_rule_3}, we now compare $\F_{p,q}\eta(x)$ and $\F_{p,q}\widetilde{\eta}(x)$ for each $x \in \mathbb{Z}$:
\begin{enumerate}
\item Suppose $\widetilde{\eta}(n) = \widetilde{\eta}(n+1) = \widetilde{\eta}(n+2) = 0$, which forces $\eta(n) = \eta(n+1) = \eta(n+2) = 0$. Then both $\F_{p,q}\eta(n)$ and $\F_{p,q}\widetilde{\eta}(n)$ have the same distribution.
\item Suppose $(\widetilde{\eta}(n), \widetilde{\eta}(n+1), \widetilde{\eta}(n+2)) \in \{0,?\}^{3} \setminus \{(0,0,0)\}$. In this case, we either have $(\eta(n), \eta(n+1), \eta(n+2)) \in \{0,?\}^{3} \setminus \{(0,0,0)\}$ or we have $(\eta(n), \eta(n+1), \eta(n+2)) = (0,0,0)$  (for instance, if $(\widetilde{\eta}(n), \widetilde{\eta}(n+1), \widetilde{\eta}(n+2)) = (?,?,0)$, we then have $(\eta(n), \eta(n+1), \eta(n+2)) \in \{(0,0,0), (?,0,0), (0,?,0), (?,?,0)\}$). We need only consider the latter case, where
\begin{multline}
\Prob[\F_{p,q}\widetilde{\eta}(n) \succeq ?] = \Prob[\F_{p,q}\widetilde{\eta}(n) = ?] + \Prob[\F_{p,q}\widetilde{\eta}(n) = 1] = r + q \\= 1-p = \Prob[\F_{p,q}\eta(n) = 1] = \Prob[\F_{p,q}\eta(n) \succeq ?], \text{ and}\nonumber
\end{multline}
\begin{align}
\Prob[\F_{p,q}\widetilde{\eta}(n) \succeq 1] = \Prob[\F_{p,q}\widetilde{\eta}(n) = 1] = q \leqslant 1-p = \Prob[\F_{p,q}\eta(n) = 1] = \Prob[\F_{p,q}\eta(n) \succeq 1].\nonumber
\end{align}
\item Finally, suppose $(\widetilde{\eta}(n), \widetilde{\eta}(n+1), \widetilde{\eta}(n+2)) \in \{0,?,1\}^{3} \setminus \{0,?\}^{3}$. The first possibility here is that $(\eta(n), \eta(n+1), \eta(n+2)) \in \{0,?,1\}^{3} \setminus \{0,?\}^{3}$, where both $\F_{p,q}\eta(n)$ and $\F_{p,q}\widetilde{\eta}(n)$ have the same distribution. The second possibility is where $(\eta(n), \eta(n+1), \eta(n+2)) \in \{0,?\}^{3} \setminus \{(0,0,0)\}$ (for instance, $(\widetilde{\eta}(n), \widetilde{\eta}(n+1), \widetilde{\eta}(n+2)) = (1,?,0)$, in which case we could have $(\eta(n), \eta(n+1), \eta(n+2)) \in \{(?,?,0), (0,?,0), (?,0,0)\}$), so that we have
\begin{align}
\Prob[\F_{p,q}\widetilde{\eta}(n) \succeq ?] = \Prob[\F_{p,q}\widetilde{\eta}(n) = 1] = q \leqslant r+q = \Prob[\F_{p,q}\eta(n) = ?] + \Prob[\F_{p,q}\eta(n) = 1] = \Prob[\F_{p,q}\eta(n) \succeq ?]\nonumber
\end{align}
and
\begin{align}
\Prob[\F_{p,q}\widetilde{\eta}(n) \succeq 1] = \Prob[\F_{p,q}\widetilde{\eta}(n) = 1] = q = \Prob[\F_{p,q}\eta(n) = 1] = \Prob[\F_{p,q}\eta(n) \succeq 1].\nonumber
\end{align}
The third and final possibility is where $(\eta(n), \eta(n+1), \eta(n+2)) = (0,0,0)$, in which case we have
\begin{align}
\Prob[\F_{p,q}\widetilde{\eta}(n) \succeq 1] = \Prob[\F_{p,q}\widetilde{\eta}(n) = 1] = q \leqslant 1-p = \Prob[\F_{p,q}\eta(n) = 1] = \Prob[\F_{p,q}\eta(n) \succeq 1].\nonumber
\end{align}  
\end{enumerate}
These observations let us conclude that $\F_{p,q}\eta \succeq \F_{p,q}\widetilde{\eta}$. Given $\mu \preceq \widetilde{\mu}$, we let $\eta$ and $\widetilde{\eta}$ denote coupled random configurations (defined on the same sample space) such that $\eta$ follows $\mu$, $\widetilde{\eta}$ follows $\widetilde{\mu}$ and $\eta \preceq \widetilde{\eta}$ almost surely. The above deduction applied to $\eta$ and $\widetilde{\eta}$ completes the proof.
\end{proof}

\begin{lemma}\label{lem:2}
Let $\mu$ and $\widetilde{\mu}$ be two probability distributions in $\mathbb{D}$. Let $\unlhd$ denote the stochastic domination (on $\mathbb{D}$) with respect to the coordinate-wise partial order induced by $0 \lhd ? \rhd 1$, and let $\mu \unlhd \widetilde{\mu}$. Then $\F_{p,q}\mu \unlhd \F_{p,q}\widetilde{\mu}$.
\end{lemma}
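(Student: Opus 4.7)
The plan is to mirror the coupling argument from Lemma~\ref{lem:1} but adapted to the partial order $0 \lhd ? \rhd 1$, in which $?$ is the unique top element while $0$ and $1$ are incomparable minima. First, I would invoke Strassen's theorem to produce a coupling $(\eta, \widetilde{\eta})$ on a single probability space with $\eta \sim \mu$, $\widetilde{\eta} \sim \widetilde{\mu}$, and $\eta(x) \unlhd \widetilde{\eta}(x)$ for every $x \in \mathbb{Z}$ almost surely. Crucially, this relation forces $\eta(x) = \widetilde{\eta}(x)$ whenever $\widetilde{\eta}(x) \in \{0,1\}$, while placing no restriction on $\eta(x)$ when $\widetilde{\eta}(x) = ?$.

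Next I would do a case analysis on where the size-$3$ windows $(\eta(n), \eta(n+1), \eta(n+2))$ and $(\widetilde{\eta}(n), \widetilde{\eta}(n+1), \widetilde{\eta}(n+2))$ sit in the three-way partition of $\hat{\mathcal{A}}^3$ that triggers the rules \eqref{envelope_PCA_rule_1}, \eqref{envelope_PCA_rule_2} and \eqref{envelope_PCA_rule_3}. If $\widetilde{\eta}$'s window is $(0,0,0)$, then $\eta$'s window must be $(0,0,0)$ as well, and both outputs share the same law. If $\widetilde{\eta}$'s window lies in $\hat{\mathcal{A}}^3 \setminus \{0,?\}^3$, i.e.\ contains a $1$ in some position, then $\eta$'s window contains that same $1$, so both windows trigger \eqref{envelope_PCA_rule_2}. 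In both of these scenarios the outputs can be coupled to be identical. The genuinely non-trivial scenario is when $\widetilde{\eta}$'s window lies in $\{0,?\}^3 \setminus \{(0,0,0)\}$, and then $\eta$'s window can, depending on how each $?$ of $\widetilde{\eta}$ is ``refined'' to a value in $\{0,?,1\}$ for $\eta$, land in any of the three regions.

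For those non-trivial sub-cases I would construct explicit couplings via a single auxiliary uniform $U_n$. When $\eta$'s window is $(0,0,0)$, giving output law $(p, 1-p)$ on $(0,1)$ and facing $\widetilde{\eta}$'s output law $(p,r,q)$ on $(0,?,1)$, I would split $[0,1)$ as $[0,p) \cup [p,p+r) \cup [p+r,1)$ and declare the coupled pair $(\F_{p,q}\eta(n), \F_{p,q}\widetilde{\eta}(n))$ to be $(0,0)$, $(1,?)$, $(1,1)$ respectively. When $\eta$'s window is also in $\{0,?\}^3 \setminus \{(0,0,0)\}$, the two marginal laws coincide and I couple the outputs equal. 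When $\eta$'s window sits in $\hat{\mathcal{A}}^3 \setminus \{0,?\}^3$, with output law $(1-q, q)$ on $(0,1)$, I would instead use the assignment $(0,0), (0,?), (1,1)$ on the same three intervals. Each coupling has the correct marginals and ever places a $?$ in $\widetilde{\eta}$'s output only opposite a $0$ or a $1$ in $\eta$'s, giving $\F_{p,q}\eta(n) \unlhd \F_{p,q}\widetilde{\eta}(n)$ almost surely.

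Finally, taking the $U_n$'s to be mutually independent across $n \in \mathbb{Z}$ and independent of $(\eta, \widetilde{\eta})$ respects the conditional independence of sitewise updates in $\F_{p,q}$, so the construction produces genuine samples from $\F_{p,q}\mu$ and $\F_{p,q}\widetilde{\mu}$ while satisfying pointwise $\unlhd$ almost surely, and a final appeal to Strassen yields $\F_{p,q}\mu \unlhd \F_{p,q}\widetilde{\mu}$. The main obstacle I expect is conceptual rather than computational: because $?$ is the top of this partial order rather than an intermediate element as in Lemma~\ref{lem:1}, a site at which $\widetilde{\eta}(x) = ?$ is compatible with $\eta(x) = 1$, and one must not overlook the sub-case in which $\widetilde{\eta}$'s window sits in $\{0,?\}^3 \setminus \{(0,0,0)\}$ while $\eta$'s sits in $\hat{\mathcal{A}}^3 \setminus \{0,?\}^3$; this is exactly the configuration that forces the domination to run in the opposite direction from Lemma~\ref{lem:1}.
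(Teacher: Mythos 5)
Your proposal is correct and follows essentially the same route as the paper: couple the two input configurations so that $\eta(x) \unlhd \widetilde{\eta}(x)$ holds pointwise, reduce to the two non-trivial window configurations (the $\widetilde{\eta}$-window in $\{0,?\}^{3}\setminus\{(0,0,0)\}$ with the $\eta$-window either $(0,0,0)$ or containing a $1$), and compare the sitewise output laws. The only difference is presentational: you verify the sitewise domination by an explicit monotone coupling via a shared uniform variable, whereas the paper checks the probabilities of the up-sets $\{1,?\}$ and $\{0,?\}$ directly, which amounts to the same thing.
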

\begin{proof}
As in the proof of Lemma~\ref{lem:1}, we begin with two configurations $\eta$ and $\widetilde{\eta}$ in $\hat{\mathcal{A}}^{\mathbb{Z}}$ with $\eta(x) \unlhd \widetilde{\eta}(x)$ for all $x \in \mathbb{Z}$. The following two situations are to be considered:
\begin{enumerate}
\item Suppose $\eta(n) = \eta(n+1) = \eta(n+2) = 0$ and $(\widetilde{\eta}(n), \widetilde{\eta}(n+1), \widetilde{\eta}(n+2)) \in \{0,?\}^{3} \setminus \{(0,0,0)\}$. Then
\begin{multline}
\Prob[\F_{p,q}\eta(n) \unrhd 1] = \Prob[\F_{p,q}\eta(n) = 1] = 1-p = r+q \\= \Prob[\F_{p,q}\widetilde{\eta}(n) = ?] + \Prob[\F_{p,q}\widetilde{\eta}(n) = 1] = \Prob[\F_{p,q}\widetilde{\eta}(n) \unrhd 1], \text{ and}\nonumber
\end{multline}
\begin{multline}
\Prob[\F_{p,q}\eta(n) \unrhd 0] = \Prob[\F_{p,q}\eta(n) = 0] = p \leqslant r+p \\= \Prob[\F_{p,q}\widetilde{\eta}(n) = ?] + \Prob[\F_{p,q}\widetilde{\eta}(n) = 0] =  \Prob[\F_{p,q}\widetilde{\eta}(n) \unrhd 0].\nonumber
\end{multline}
\item When $(\eta(n),\eta(n+1),\eta(n+2)) \in \{0,?,1\}^{3} \setminus \{0,?\}^{3}$, we either have $(\widetilde{\eta}(n), \widetilde{\eta}(n+1), \widetilde{\eta}(n+2)) \in \{0,?\}^{3} \setminus \{(0,0,0)\}$ or $(\widetilde{\eta}(n), \widetilde{\eta}(n+1), \widetilde{\eta}(n+2)) \in \{0,?,1\}^{3} \setminus \{0,?\}^{3}$ (for instance, if $(\eta(n), \eta(n+1), \eta(n+2)) = (0,1,1)$, we would consider $(\widetilde{\eta}(n), \widetilde{\eta}(n+1), \widetilde{\eta}(n+2)) \in \{(?,1,1), (0,?,1), (0,1,?), (?,?,1), (?,1,?), (0,?,?), (?,?,?)\}$), but we need only consider the former scenario, since in the latter, $\F_{p,q}\eta(n)$ and $\F_{p,q}\widetilde{\eta}(n)$ have the same distribution. Then
\begin{multline}
\Prob[\F_{p,q}\eta(n) \unrhd 1] = \Prob[\F_{p,q}\eta(n) = 1] = q \leqslant r+q \\= \Prob[\F_{p,q}\widetilde{\eta}(n) = ?] + \Prob[\F_{p,q}\widetilde{\eta}(n) = 1] = \Prob[\F_{p,q}\widetilde{\eta}(n) \unrhd 1], \text{ and}\nonumber
\end{multline}
and
\begin{multline}
\Prob[\F_{p,q}\eta(n) \unrhd 0] = \Prob[\F_{p,q}\eta(n) = 0] = 1-q = r+p \\= \Prob[\F_{p,q}\widetilde{\eta}(n) = ?] + \Prob[\F_{p,q}\widetilde{\eta}(n) = 0] =  \Prob[\F_{p,q}\widetilde{\eta}(n) \unrhd 0].\nonumber
\end{multline}
\end{enumerate}
These observations together let us conclude that $\F_{p,q}\eta \unlhd \F_{p,q}\widetilde{\eta}$, which in turn, arguing the same way as in Lemma~\ref{lem:1}, yields the conclusion stated in Lemma~\ref{lem:2}.
\end{proof}

We implement Lemma~\ref{lem:1} and the argument used in proving Proposition 2.1 of \cite{holroyd2019percolation} to establish Proposition~\ref{prop:ergodicity_equivalence}. Next, an argument identical to that used in proving Proposition 2.2 of \cite{holroyd2019percolation} yields a proof of Proposition~\ref{prop:ergodicity_implies_draw_probab_0}. We are now left with the task of proving Theorem~\ref{thm:main_3}.

\section{The method of weight functions and the proof of Theorem~\ref{thm:main_3}}\label{sec:weight_function}
As discussed in \S\ref{subsec:PCA}, it is a non-trivial task to establish, rigorously, the ergodicity of PCAs in general. It has also been explicitly stated in \cite{holroyd2019percolation} that coming up with a suitable weight function or potential function that serves our purpose of proving ergodicity is not an easy feat either. 

Before we proceed further, we recall here the definitions of translation-invariant and reflection-invariant probability measures as discussed right after Definition~\ref{ergodicity_defn}. Given any $1$-dimensional PCA with alphabet $\mathcal{A}$, a finite index set $S = \{y_{1}, y_{2}, \ldots, y_{n}\} \subset \mathbb{Z}$, and symbols $a_{1}$, $a_{2}$, $\ldots$, $a_{n}$ that belong to $\mathcal{A}$, we call $(a_{1} a_{2} \ldots a_{n})_{S} = \{\eta \in \mathcal{A}^{\mathbb{Z}}: \eta(y_{i}) = a_{i} \text{ for all } 1 \leqslant i \leqslant n\}$ a \emph{cylinder set indexed by $S$}. When a probability measure $\mu$ is translation-invariant, and $y_{i} = k+i$ for all $1 \leqslant i \leqslant n$ and for some $k \in \mathbb{Z}$, we denote the measure $\mu((a_{1} a_{2} \ldots a_{n})_{S})$ of the cylinder set $(a_{1} a_{2} \ldots a_{n})_{S}$ by simply $\mu(a_{1} a_{2} \ldots a_{n})$, since the `location' of $S$ (i.e.\ the value of $k$) ceases to be relevant. For instance, if $\eta$ is a random configuration following the law $\mu$, then for \emph{any} $x \in \mathbb{Z}$, we can let $\mu(?)$ indicate the probability of the event $\eta(x) = ?$. Likewise, for \emph{any} $x \in \mathbb{Z}$, we can let $\mu(0?)$ indicate the probability of the event that $\eta(x) = 0$ and $\eta(x+1) = ?$, and so on. When $\mu$ is both translation-invariant and reflection-invariant, $\mu(a_{1} a_{2} \ldots a_{n}) = \mu(a_{n} a_{n-1} \ldots a_{1})$ for all $a_{1}, \ldots, a_{n} \in \mathcal{A}$. For instance, $\mu(10?) = \mu(?01)$, since the former is the probability of the event $\eta(-1) = 1, \eta(0) = 0, \eta(1) = ?$ whereas the latter is the probability of the event $\eta(-1) = ?, \eta(0) = 0, \eta(1) = 1$.

\begin{remark}\label{rem:shift_reflection_invariant_suffices}
Via an argument identical to that outlined at the very beginning of the proof of Proposition 2.3 of \cite{holroyd2019percolation}, and using Lemma~\ref{lem:2}, we conclude that to prove Theorem~\ref{thm:main_3}, it suffices to show that under no translation-invariant and reflection-invariant stationary distribution for $\F_{p,q}$ can the symbol $?$ appear anywhere with positive probability, for each $(p,q) \in \mathcal{S}$ (where $\mathcal{S}$ is as defined in \eqref{mathcal{S}_defn}). Consequently, we confine ourselves to translation-invariant and reflection-invariant probability measures from now on.
\end{remark}
 
We now come to the actual construction of the weight function, which is accomplished via several steps. To begin with, for the sake of brevity, we let $\widehat{**}$ denote the set $\{0,?\}^{2} \setminus \{(0,0)\}$ and $\widehat{***}$ the set $\{0,?\}^{3} \setminus \{(0,0,0)\}$. We may think of $(\widehat{**})$ as representing the cylinder set in which $(\eta(0),\eta(1)) \in \widehat{**}$, and $(\widehat{***})$ as representing the cylinder set in which $(\eta(0),\eta(1),\eta(2)) \in \widehat{***}$. We write $(S_{0} S_{1} \ldots S_{k} \widehat{***} S'_{0} S'_{1} \ldots S'_{k'})$ (likewise, $(S_{1} S_{2} \ldots S_{k} \widehat{**} S'_{1} S'_{2} \ldots S'_{k'})$) to indicate the cylinder set in which $\eta(i) \in S_{i}$ for all $0 \leqslant i \leqslant k$, $(\eta(k+1),\eta(k+2),\eta(k+3)) \in \widehat{***}$, and $\eta(k+4+i) \in S'_{i}$ for all $0 \leqslant i \leqslant k'$, for any subsets $S_{0}, \ldots, S_{k}, S'_{0}, \ldots, S'_{k'}$ of $\{0,?,1\}$. When $S_{i} = \{a_{i}\}$ is a singleton for some $0 \leqslant i \leqslant k$, we replace $S_{i}$ in the above notation by simply $a_{i}$ (and likewise when $S'_{i}$ is a singleton for some $0 \leqslant i \leqslant k'$). 

Keeping the reader's convenience in mind and before we plunge into the intricate technicalities of the weight function derivation, we briefly dwell here on how we plan to accomplish the task at hand, i.e.\ proving Theorem~\ref{thm:main_3}, using the method of weight functions. Following the notations introduced above, we call a cylinder set $(a_{1}, a_{2}, \ldots, a_{n})_{S}$ \emph{$?$-inclusive} if $a_{i} = ?$ for at least one $i \in \{1, \ldots, n\}$. We envisage our weight function $w(\mu)$ to be a linear combination \begin{equation}\label{gen_form_weight}
w(\mu) = \sum_{i=1}^{s}c_{i}\mu(\mathcal{C}_{i})
\end{equation}
of cylinder sets $\mathcal{C}_{1}, \ldots, \mathcal{C}_{s}$, each of which is $?$-inclusive, with $c_{1}, \ldots, c_{s}$ being real constants (that are functions of the parameters $p$ and $q$), and we want it to satisfy an inequality of the form 
\begin{equation}\label{rough_inequality_form}
w(\F_{p,q}\mu) \leqslant w(\mu) - \sum_{i=1}^{s'}c'_{i}\mu(\mathcal{C}'_{i}),
\end{equation}
where $c'_{1}, \ldots, c'_{s'}$ are non-negative real constants (and are, once again, functions of $p$ and $q$) and $\mathcal{C}'_{1}, \ldots, \mathcal{C}'_{s'}$ are, once again, $?$-inclusive cylinder sets. When $\mu$ is stationary for $\F_{p,q}$, we have $w(\F_{p,q}\mu) = w(\mu)$, which then yields $\sum_{i=1}^{s'}c'_{i}\mu(\mathcal{C}'_{i}) = 0$. The constants $c'_{1}, \ldots, c'_{s'}$ are such that, when $p+q > 0$, we can find a \emph{non-empty} $P \subset \{1, \ldots, s'\}$ such that $c'_{i} > 0$ whenever $i \in P$. This, in turn, yields $\mu(\mathcal{C}'_{i}) = 0$ for every $i \in P$ whenever $\mu$ is stationary and $p+q > 0$, and from these, we infer (as detailed in \S\ref{subsec:compose_5}), that $\mu(?) = 0$.

\subsection{Computation of the probabilities of various cylinder sets under the pushforward measure induced by the action of $\F_{p,q}$}\label{subsec:cylinder_computations} Recall from \S\ref{subsec:PCA} the definition of the pushforward measure $\F_{p,q}\mu$. For \emph{any} translation-invariant and reflection-invariant probability measure $\mu$ belonging to $\mathbb{D}$ (recall from \S\ref{sec:prelim_results}), using \eqref{envelope_PCA_rule_3}, we have $\F_{p,q}\mu(?) = r\mu(\widehat{***})$. In what follows, we use \eqref{envelope_PCA_rule_1}, \eqref{envelope_PCA_rule_2} and \eqref{envelope_PCA_rule_3} to compute the relevant probabilities. Since these computations involve rather similar arguments, we explain in detail only the somewhat-less-straightforward ones.

To compute $\F_{p,q}\mu(100?)$, we note that for $(\F_{p,q}\eta(0), \F_{p,q}\eta(1), \F_{p,q}\eta(2), \F_{p,q}\eta(3))$ to equal $(100?)$ for some $\eta \in \hat{\mathcal{A}}^{\mathbb{Z}}$, we require $(\eta(3),\eta(4),\eta(5)) \in \widehat{***}$. If $\eta(0) = \eta(1) = \eta(2) = 0$, the event $\F_{p,q}\eta(0) = 1$ happens with probability $1-p$, whereas if $(\eta(0), \eta(1), \eta(2)) \in \widehat{***}$ or $\eta(0) = 1$ and $(\eta(1), \eta(2)) \in \{0,?\}^{2}$, the event $\F_{p,q}\eta(0) = 1$ happens with probability $q$, and in each of these situations, each of the events $\F_{p,q}\eta(1) = 0$ and $\F_{p,q}\eta(2) = 0$ happens with probability $p$. If $\eta(1) = 1$ and $\eta(2) \in \{0,?\}$, then $\F_{p,q}\eta(0) = 1$ happens with probability $q$, $\F_{p,q}\eta(1) = 0$ happens with probability $1-q$ and $\F_{p,q}\eta(2) = 0$ happens with probability $p$. If $\eta(2) = 1$, then $\F_{p,q}\eta(0) = 1$ happens with probability $q$ and each of $\F_{p,q}\eta(1) = 0$ and $\F_{p,q}\eta(2) = 0$ happens with probability $1-q$. Combining all, we have
\begin{align}\label{F_{p,q}mu(100?)}
\mu(100?) &= (1-p)p^{2}r\mu(000\widehat{***}) + qp^{2}r[\mu(\widehat{***}\widehat{***}) + \mu(1\{0,?\}^{2}\widehat{***})] + q(1-q)pr\mu(1\{0,?\}\widehat{***}) \nonumber\\&+ q(1-q)^{2}r\mu(1\widehat{***}) = (1-p)p^{2}r\mu(000\widehat{***}) + D_{100?} = p^{2}r^{2}\mu(000\widehat{***}) + C_{100?},
\end{align}
where $C_{100?} = qp^{2}r\mu(\widehat{***}) + qpr^{2}\mu(1\{0,?\}\widehat{***}) + qr^{2}(1-q+p)\mu(1\widehat{***})$ and $D_{100?} = qp^{2}r\mu(\widehat{***}\widehat{***}) + qp^{2}r\mu(1\{0,?\}^{2}\widehat{***}) + q(1-q)pr\mu(1\{0,?\}\widehat{***}) + q(1-q)^{2}r\mu(1\widehat{***})$. Arguing likewise, we obtain
\begin{align}
\bullet & \F_{p,q}\mu(0?) = pr\mu(\{0,?\}\widehat{***}) + (1-q)r\mu(1\widehat{***});\label{F_{p,q}mu(0?)}\\
\bullet & \F_{p,q}\mu(?0?) = pr^{2}[\mu(\{0,?\}^{2}\widehat{***}) - \mu(000\widehat{**})];\label{F_{p,q}mu(?0?)}\\
\bullet & \F_{p,q}\mu(1?) = r^{2}\mu(000?) + qr\mu(\widehat{***}).\label{F_{p,q}mu(1?)}
\end{align}

For some cylinder sets, we only need certain parts of the expressions for their probabilities. For instance, while computing $\F_{p,q}\mu(10?)$, we first consider $\eta(0) = \eta(1) = \eta(2) = 0$, so that $(\eta(3), \eta(4)) \in \widehat{**}$, the event $\F_{p,q}\eta(0) = 1$ happens with probability $1-p$, and the event $\F_{p,q}\eta(1) = 0$ happens with probability $p$. Next, we consider $\eta(0) = 1$, $\eta(1) \in \{0,?\}$ and $(\eta(2), \eta(3), \eta(4)) \in \widehat{***}$, so that $\F_{p,q}\eta(0) = 1$ happens with probability $q$ and $\F_{p,q}\eta(1) = 0$ happens with probability $p$, and finally, we consider $\eta(1) = 1$, so that $\F_{p,q}\eta(0) = 1$ happens with probability $q$ and $\F_{p,q}\eta(1) = 0$ happens with probability $1-q$. Therefore
\begin{align}\label{F_{p,q}mu(10?)}
\mu(10?) &= (1-p)pr\mu(000\widehat{**}) + C_{10?} + D_{10?}
\end{align}
where $C_{10?} = qpr\mu(1\{0,?\}\widehat{***}) + q(1-q)r\mu(1\widehat{***})$ and $D_{10?}$ is the component arising from the case where $(\eta(0), \eta(1), \eta(2)) \in \widehat{***}$. Similar arguments lead to
\begin{align}
\bullet & F_{p,q}\mu(1??) = (1-p)r^{2}\mu(000?\{0,?\}) + C_{1??}\label{F_{p,q}mu(1??)};\\
\bullet & F_{p,q}\mu(1?0?) = (1-p)r^{2}p\mu(000?\{0,?\}^{2}) + C_{1?0?}\label{F_{p,q}mu(1?0?)};\\
\bullet & F_{p,q}\mu(10??) = (1-p)pr^{2}\mu(000\widehat{**}\{0,?\}) + C_{10??}\label{F_{p,q}mu(10??)};\\
\bullet & F_{p,q}\mu(1?00) = (1-p)rp^{2}\mu(000?) + (1-p)r^{2}p\mu(000?\{0,?\}1) \nonumber\\&+ (1-p)r^{2}(1+p-q)\mu(000?1) + C_{1?00}\label{F_{p,q}mu(1?00)};\\
\bullet & F_{p,q}\mu(10?0) = (1-p)p^{2}r\mu(000\widehat{**}) + (1-p)pr^{2}\mu(000\widehat{**}1) + C_{10?0}\label{F_{p,q}mu(10?0)};
\end{align}
where $C_{A}$ accounts for the contribution from cases in which $(\eta(0), \eta(1), \eta(2)) \in \hat{\mathcal{A}}^{3} \setminus \{(0,0,0)\}$, with $A$ being any of $1??$, $1?0?$, $10??$, $1?00$ and $10?0$. While computing $\F_{p,q}\mu(1?01)$, we first consider the case where $\eta(3) = \eta(4) = \eta(5) = 0$, so that $(\eta(1), \eta(2)) \in \widehat{**}$, and the event $\F_{p,q}\eta(2) = 0$ happens with probability $p$ while the event $\F_{p,q}\eta(3) = 1$ happens with probability $1-p$. Note that in this situation, no matter what the value of $\eta(0)$ is, the event $\F_{p,q}\eta(0) = 1$ happens with probability $q$. The second possibility we take into account is where $\eta(0) = \eta(1) = \eta(2) = 0$, which then forces $\eta(3) = ?$, and $\F_{p,q}\eta(0) = 1$ happens with probability $1-p$. If $\eta(4) \in \{0,?\}$, the events $\F_{p,q}\eta(2) = 0$ and $\F_{p,q}\eta(3) = 1$ happen with probabilities $p$ and $q$ respectively, whereas if $\eta(4) = 1$, they happen with probabilities $1-q$ and $q$ respectively. Combining all, we have
\begin{align}\label{F_{p,q}mu(1?01)}
\F_{p,q}\mu(1?01) &= qrp(1-p)\mu(\widehat{**}000) + (1-p)rpq\mu(000?\{0,?\}) + (1-p)r(1-q)q\mu(000?1) + C_{1?01}\nonumber\\
&= 2(1-p)rpq\mu(000?) + (1-p)prq\mu(0000?) + (1-p)rq(r-p)\mu(000?1) + C_{1?01},
\end{align}
where $C_{1?01}$ takes into account the contributions from the situations not considered above. We note here, crucially, that each of $C_{1??}, \ldots, C_{1?01}$ and $D_{10?}$ is non-negative.

\subsection{Important inequalities used in the derivation of the weight function}\label{subsec:ineq}
We use Tables~\ref{table_1}, \ref{table_2}, \ref{table_3} and \ref{table_4} to illustrate the derivation of a few inequalities. In each of these tables, the first row indicates the indices of the coordinates in $\mathbb{Z}$, and the rows that follow represent events involving cylinder sets. To elucidate, in Table~\ref{table_1}, the second and third rows represent respectively the events $(\eta(-1),\eta(0),\eta(1),\eta(2)) = (1,?,?,?)$ and $(\eta(-1),\eta(0),\eta(1),\eta(2)) = (1,?,?,0)$, and it is immediate that these two events are disjoint since they disagree on the symbol that occupies the coordinate $2$. In fact, for any two distinct rows that are inside the same table, it can be seen that the corresponding events are mutually exclusive. To give the reader an understanding of how we make use of these tables, note that the union of the pairwise disjoint events listed in Table~\ref{table_1} forms a subset of the event $\eta(0) = ?$. Thus, Table~\ref{table_1}, along with Remark~\ref{rem:shift_reflection_invariant_suffices}, allows us to write 
\begin{align}\label{ineq_1}
\mu(?) &\geqslant \mu(\widehat{***}) - \mu(0??) - \mu(0?0) - \mu(?00) + 2\mu(1\widehat{***}) - 2\mu(100?) - \{\mu(??01) + \mu(0?01)\} \nonumber\\&- \{\mu(?0?1) + \mu(00?1)\} + \{2\mu(1??1) + \mu(1?1) + 2\mu(1?01)\}\nonumber\\ 
&= \mu(\widehat{***}) - \mu(0??) - \mu(0?0) - \mu(0?1) - \mu(?00) - \mu(?01) + 2\mu(1\widehat{***}) - 2\mu(100?) \nonumber\\&+ \{2\mu(1??1) + \mu(1?1) + 4\mu(1?01)\} \nonumber\\  
&= \mu(\widehat{***}) - 2\mu(0?) + \mu(?0?) - 2\mu(100?) + 2\mu(1\widehat{***}) + 2\mu(1??1) + \mu(1?1) + 4\mu(1?01).  
\end{align}
Likewise, Tables~\ref{table_2}, \ref{table_3} and \ref{table_4} together yield
\begin{align}\label{ineq_2}
&\mu(?) + \mu(0?) + \mu(00?) \geqslant \mu(\{0,?\}\widehat{***}) + 2\mu(1\widehat{***}) - \mu(100?) + \mu(1?) + 2\mu(1?01) + \mu(1??1).
\end{align}

\begin{table}[htbp]
\begin{center}
\begin{minipage}[b]{0.32\hsize}\centering
\begin{tabular}{ |c|c|c|c|c| } 
 \hline
-2 & -1 & 0 & 1 & 2 \\ 
 \hline
& 1 & ? & ? & ?  \\
  \hline
& 1 & ? & ? & 0  \\
\hline
& 1 & ? & 0 & ?  \\
\hline
& 1 & ? & 0 & 0  \\
\hline
& ? & ? & ? &  \\
\hline
& ? & ? & 0 &  \\
\hline
0 & 0 & ? &  &  \\
\hline
? & 0 & ? &  &  \\
\hline
? & ? & ? & 1 &  \\ 
\hline
0 & ? & ? & 1 &  \\ 
\hline
1 & 0 & ? & ? &  \\
\hline
1 & 0 & ? & 0 &  \\
\hline
& 1 & ? & ? & 1  \\
\hline
& 1 & ? & 0 & 1  \\
\hline
& 1 & ? & 1 &  \\
\hline
1 & ? & ? & 1 &  \\
\hline
1 & 0 & ? & 1 &  \\
\hline
\end{tabular}
\caption{To establish \eqref{ineq_1}}
\label{table_1}
\end{minipage}
\quad
\begin{minipage}[b]{0.32\hsize}\centering
\begin{tabular}{|c|c|c|c|}
\hline
-1 & 0 & 1 & 2\\
\hline
? & ? & ? & ?\\
\hline
? & ? & ? & 0\\
\hline
? & ? & 0 & ?\\
\hline
? & ? & 0 & 0\\
\hline
0 & ? & ? & ?\\
\hline
0 & ? & ? & 0\\
\hline
0 & ? & 0 & ?\\
\hline
0 & ? & 0 & 0\\
\hline
1 & ? & ? & ?\\
\hline
1 & ? & ? & 0\\
\hline
1 & ? & ? & 1\\
\hline
1 & ? & 0 & ?\\
\hline
1 & ? & 0 & 0\\
\hline
1 & ? & 0 & 1\\
\hline
 & ? & 1 & \\
\hline
? & ? & ? & 1\\
\hline
0 & ? & ? & 1\\
\hline
? & ? & 0 & 1\\
\hline
0 & ? & 0 & 1\\
\hline
\end{tabular}
\caption{To establish \eqref{ineq_2}}
\label{table_2}
\end{minipage}
\begin{minipage}[b]{0.32\hsize}\centering
\begin{tabular}{|c|c|c|c|}
\hline
-1 & 0 & 1 & 2\\
\hline
? & 0 & ? & ?\\
\hline
? & 0 & ? & 0\\
\hline
0 & 0 & ? & ?\\
\hline
0 & 0 & ? & 0\\
\hline
1 & 0 & ? & ?\\
\hline
1 & 0 & ? & 0\\
\hline
? & 0 & ? & 1\\
\hline
0 & 0 & ? & 1\\
\hline
1 & 0 & ? & 1\\
\hline
\end{tabular}
\caption{To establish \eqref{ineq_2}}
\label{table_3}
\begin{tabular}{|c|c|c|c|}
\hline
-1 & 0 & 1 & 2\\
\hline
? & 0 & 0 & ?\\
\hline
0 & 0 & 0 & ?\\
\hline
1 & 0 & 0 & ?\\
\hline
\end{tabular}
\caption{To establish \eqref{ineq_2}}
\label{table_4}
\end{minipage}
\end{center}
\end{table}

\subsection{The first step of composing the weight function}\label{subsec:compose_1} We start by defining 
\begin{equation}\label{initial_weight}
w_{0}(\mu) = \mu(?) + 2\mu(0?) - \mu(?0?) + 2\mu(100?).
\end{equation}
It is not straightforward to explain our intuition behind setting $w_{0}$ in \eqref{initial_weight} as our `initial' choice of weight function (after which it gets tweaked and adjusted in several steps described in the sequel to yield the final weight function). Since we are ultimately interested in $\mu(?)$ when $\mu$ is stationary, and since each $\mathcal{C}_{i}$ in \eqref{gen_form_weight} is $?$-inclusive, it is not too far-fetched to entertain the possibility of starting with $\mathcal{C}_{1} = (?)_{\{0\}}$ (i.e.\ the cylinder set in which $?$ occupies the origin). Thus, the right side of \eqref{rough_inequality_form} contains $\mu(?)$, while the left contains $\F_{p,q}\mu(?) = r\mu(\widehat{***})$. When $p$ and $q$ are both small (intuitively, our task of showing $\mu(?) = 0$ for $\mu$ stationary ought to become harder the smaller $p+q$ gets), $r\mu(\widehat{***})$ is nearly equal to $\mu(\widehat{***})$. From \eqref{ineq_1}, we see that $\mu(?) + 2\mu(0?) - \mu(?0?) + 2\mu(100?)$ serves as an upper bound for $\mu(\widehat{***})$. The appearance of $2\mu(0?)$ in the right side of \eqref{rough_inequality_form} implies, from \eqref{F_{p,q}mu(0?)}, that $2(1-q)r\mu(1\widehat{***})$ appears in the left side of \eqref{rough_inequality_form}, and when $p$ and $q$ are both small, this is nearly the same as $2\mu(1\widehat{***})$. From \eqref{ineq_1}, we see that $\mu(?) + 2\mu(0?) - \mu(?0?) + 2\mu(100?)$, in fact, serves as an upper bound for $\mu(\widehat{***}) + 2\mu(1\widehat{***})$. All these provide ample justification as to why we start with $w_{0}$ in \eqref{initial_weight} as our initial weight function.

From \eqref{initial_weight}, \eqref{F_{p,q}mu(0?)}, \eqref{F_{p,q}mu(?0?)} and \eqref{F_{p,q}mu(100?)}, and applying inequalities \eqref{ineq_1} (to obtain an upper bound for the term $r\mu(\widehat{***}) + 2r\mu(1\widehat{***})$) and \eqref{ineq_2} (to obtain an upper bound for the term $pr\mu(\{0,?\}\widehat{***})$), we have:
\begin{align}\label{w_{0}_ineq}
& w_{0}(\F_{p,q}\mu) = (r + 2qp^{2}r)\mu(\widehat{***}) + 2r\{1 + q^{3} - 2q^{2} - qp^{2})\}\mu(1\widehat{***}) + pr\mu(\{0,?\}\widehat{***}) \nonumber\\&+ (pr +  2qpr^{2})\mu(1\{0,?\}\widehat{***}) + pr(p+q)\mu(\{0,?\}^{2}\widehat{***}) + pr^{2}\mu(000\widehat{**}) + 2p^{2}r^{2}\mu(000\widehat{***}) \nonumber\\
&\leqslant w_{0}(\mu) - [p(1-r)+q][\mu(?) + \mu(0?) + \mu(00?)] - (p+q)\mu(10?) - [p(2-r)+2q]\mu(100?)\nonumber\\& - (2r+pr)\mu(1??1) - r\mu(1?1) - (4r+2pr)\mu(1?01) - pr\mu(1?) + 2qp^{2}r\mu(\widehat{***}) \nonumber\\&- 2r(p + 2q^{2} + qp^{2} - q^{3})\mu(1\widehat{***}) + (pr + 2qpr^{2})\mu(1\{0,?\}\widehat{***}) + pr(p+q)\mu(\{0,?\}^{2}\widehat{***}) \nonumber\\&+ pr^{2}\mu(000\widehat{**}) + 2p^{2}r^{2}\mu(000\widehat{***}).
\end{align}
One may note here how \eqref{w_{0}_ineq} lays down the first of the stepping stones which pave the way towards an inequality that resembles \eqref{rough_inequality_form}. However, there are, at this point, several terms in the right side of \eqref{w_{0}_ineq} (such as $2qp^{2}r\mu(\widehat{***})$, $(pr + 2qpr^{2})\mu(1\{0,?\}\widehat{***})$ etc.), other than $w_{0}(\mu)$, in which the coefficients are non-negative, and this needs to be remedied. This is what we accomplish, via several adjustments and suitable algebraic manipulations in-between, in \S\ref{subsec:compose_2}, \S\ref{subsec:compose_3}, \S\ref{subsec:compose_4} and \S\ref{subsec:compose_5}.

In each of the steps involving the above-mentioned adjustments, the inequality (beginning from \eqref{w_{0}_ineq}) evolves. Suppose we have performed $i$ adjustments so far, and the weight function currently under consideration is denoted by $w_{i}$. Let the inequality that we currently have be given by 
\begin{equation}\label{i-th_inequality}
w_{i}(\F_{p,q}\mu) \leqslant w_{i}(\mu) - \sum_{j=1}^{s_{i}}\alpha_{i,j}\mu(\mathcal{E}_{i,j}),
\end{equation}
where $\alpha_{i,1}, \ldots, \alpha_{i,s_{i}}$ are real constants and $\mathcal{E}_{i,1}, \ldots, \mathcal{E}_{i,s_{i}}$ are cylinder sets. Suppose the $(i+1)$-st adjustment is defined via the equation $w_{i+1}(\mu) = w_{i}(\mu) - \sum_{j=1}^{t_{i}}\beta_{i,j}\mu(\mathcal{G}_{i,j})$, in which $\beta_{i,1}, \ldots, \beta_{i,t_{i}}$ are real constants and $\mathcal{G}_{i,1}, \ldots, \mathcal{G}_{i,t_{i}}$ are cylinder sets. We can now rewrite \eqref{i-th_inequality} as follows:
\begin{align}\label{general_adjustment_form}
& w_{i+1}(\F_{p,q}\mu) + \sum_{j=1}^{t_{i}}\beta_{i,j}\F_{p,q}\mu(\mathcal{G}_{i,j}) \leqslant w_{i+1}(\mu) + \sum_{j=1}^{t_{i}}\beta_{i,j}\mu(\mathcal{G}_{i,j}) - \sum_{j=1}^{s_{i}}\alpha_{i,j}\mu(\mathcal{E}_{i,j})\nonumber\\
\implies & w_{i+1}(\F_{p,q}\mu) \leqslant w_{i+1}(\mu) + \sum_{j=1}^{t_{i}}\beta_{i,j}\mu(\mathcal{G}_{i,j}) - \sum_{j=1}^{t_{i}}\beta_{i,j}\F_{p,q}\mu(\mathcal{G}_{i,j}) - \sum_{j=1}^{s_{i}}\alpha_{i,j}\mu(\mathcal{E}_{i,j}).
\end{align}
This is the commonality shared by all the adjustments described in the sequel, and we refer back, several times, to \eqref{general_adjustment_form} and use it to see how the inequality evolves with each adjustment.

\subsection{The second step of composing the weight function}\label{subsec:compose_2}
The first adjustment to the initial weight function in \eqref{initial_weight} is accomplished by setting
\begin{equation}\label{adjust_1}
w_{1}(\mu) = w_{0}(\mu) - p(p+q)\mu(?) = w_{0}(\mu) - p(1-r)\mu(?).
\end{equation}
Applying \eqref{general_adjustment_form} to this adjustment, we have, on one hand, $p(1-r)\mu(?) - [p(1-r)+q]\mu(?) = -q\mu(?)$, and on the other, 
\begin{align}
&- p(p+q)\F_{p,q}\mu(?) + pr(p+q)\mu(\{0,?\}^{2}\widehat{***}) + (pr + 2qpr^{2})\mu(1\{0,?\}\widehat{***}) \nonumber\\&- 2r(p + 2q^{2} + qp^{2} - q^{3})\mu(1\widehat{***}) \nonumber\\
&= - r(p + 4q^{2} + 4qp^{2} - 2q^{3} + 2p^{2} + 2q^{2}p)\mu(1\widehat{***}) - (pr^{2} + 2qpr^{2})\mu(1\widehat{***}1)+ (pr^{2} + 2qpr^{2})\mu(1000?) \nonumber\\& - (pr^{2} + 2qpr^{2})\mu(1?000),\nonumber
\end{align}
where we make use of the identities $\mu(\widehat{***}) = \mu(1\widehat{***}) + \mu(\{0,?\}^{2}\widehat{***}) + \mu(1\{0,?\}\widehat{***})$ and $\mu(1\{0,?\}\widehat{***}) = \mu(1\widehat{***}\{0,?\}) + \mu(1000?) - \mu(1?000)$. Next, using the identities $\mu(1000?) = \mu(000?) - \mu(?000?) - \mu(0000?)$ and $\mu(000??) + \mu(000?0) = \mu(000?) - \mu(000?1)$ and 
\begin{equation}\label{identity_1}
\mu(000\widehat{***}) = \mu(000?) + \mu(0000?) + \mu(00000?) - \mu(000\widehat{**}1) - \mu(000?1),
\end{equation}
we perform the following algebraic manipulation:
\begin{align}
& (pr^{2} + 2qpr^{2})\mu(1000?) + pr^{2}\mu(000\widehat{**}) - (pr^{2} + 2qpr^{2})\mu(1?000) + 2p^{2}r^{2}\mu(000\widehat{***})\nonumber\\
&= 2pr^{2}(1 + p + 2q)\mu(000?) - 2pr^{2}(1 + 2q + p)\mu(1?000) + 2p^{2}r^{2}[\mu(0000?) + \mu(00000?)]\nonumber\\& - 2p^{2}r^{2}\mu(000\widehat{**}1) - pr^{2}(1 + 2q)\mu(?000?) - 2qpr^{2}\mu(000\widehat{**}).\nonumber
\end{align}
Combining all the findings above, we see that the inequality \eqref{w_{0}_ineq} evolves to 
\begin{align}\label{w_{1}_ineq}
& w_{1}(\F_{p,q}\mu) \leqslant w_{1}(\mu) - q\mu(?) - [p(1-r)+q][\mu(0?) + \mu(00?)] - (p+q)\mu(10?) - [p(2-r)+2q]\nonumber\\&\mu(100?) - (2r+pr)\mu(1??1) - r\mu(1?1) - (4r+2pr)\mu(1?01) - pr\mu(1?) + 2qp^{2}r\mu(\widehat{***}) \nonumber\\& - r(p + 4q^{2} + 4qp^{2} - 2q^{3} + 2p^{2} + 2q^{2}p)\mu(1\widehat{***}) - pr^{2}(1 + 2q)\mu(1\widehat{***}1) - pr^{2}(1 + 2q)\nonumber\\&\mu(?000?) - 2qpr^{2}\mu(000\widehat{**}) + 2pr^{2}(1 + p + 2q)\mu(000?) - 2pr^{2}(1 + 2q + p)\mu(1?000) \nonumber\\&+ 2p^{2}r^{2}[\mu(0000?) + \mu(00000?)] - 2p^{2}r^{2}\mu(000\widehat{**}1).
\end{align}
Again, \eqref{w_{1}_ineq}, much like \eqref{w_{0}_ineq}, forms part of the foundation upon which the derivation of an inequality of the form \eqref{rough_inequality_form} is built. But still, there are terms in the right side of \eqref{w_{1}_ineq} (such as $2pr^{2}(1 + p + 2q)\mu(000?)$, $2p^{2}r^{2}[\mu(0000?) + \mu(00000?)]$ etc.) in which the coefficients are non-negative. Therefore, further adjustments are necessary.

\subsection{The third step of composing the weight function}\label{subsec:compose_3}
The second adjustment is carried out as follows:
\begin{equation}\label{adjust_2}
w_{2}(\mu) = w_{1}(\mu) - \{2pr\{\mu(1?) + \mu(10?)\} + 2p^{2}r\{\mu(1??) + \mu(1?0?) + \mu(10??)\} + 4r\mu(1?01) + 2p\mu(100?)\}.
\end{equation}
Applying \eqref{general_adjustment_form} to this adjustment, and using the identities $\mu(1\widehat{***}) = \mu(1?) + \mu(10?) + \mu(100?) - \mu(1?1) - \mu(1??1) - 2\mu(1?01)$ and $\mu(1???) + \mu(1??0) = \mu(1??) - \mu(1??1)$, we obtain, on one hand,
\begin{align}\label{adjust_2_mu_terms_sum}
&- (p+q)\mu(10?) - [p(2-r)+2q]\mu(100?) - (2r+pr)\mu(1??1) - r\mu(1?1) - (4r+2pr)\mu(1?01) - pr\mu(1?) \nonumber\\&- r(p + 4q^{2} + 4qp^{2} - 2q^{3} + 2p^{2} + 2q^{2}p)\mu(1\widehat{***}) + 2pr\{\mu(1?) + \mu(10?)\} + 2p^{2}r\{\mu(1??) + \mu(1?0?) \nonumber\\&+ \mu(10??)\} + 4r\mu(1?01) + 2p\mu(100?)\nonumber\\
&= - (p+q)\mu(10?) - [p(2-r)+2q]\mu(100?) - (2r+pr)\mu(1??1) - r\mu(1?1) - (4r+2pr)\mu(1?01) - pr\mu(1?) \nonumber\\& - pr\{\mu(1?) + \mu(10?) + \mu(100?) - \mu(1?1) - \mu(1??1) - 2\mu(1?01)\} - 2p^{2}r\{\mu(1??) - \mu(1??1) + \mu(1?0?) \nonumber\\& + \mu(10??)\} - r(4q^{2} + 4qp^{2} - 2q^{3} + 2q^{2}p)\{\mu(1???) + \mu(1??0) + \mu(1?0?) + \mu(10??)\} - r(4q^{2} + 4qp^{2} - 2q^{3} \nonumber\\& + 2p^{2} + 2q^{2}p)\{\mu(1?00) + \mu(10?0) + \mu(100?)\} + 2pr\mu(1?) + 2pr\mu(10?) + 2p^{2}r\{\mu(1??) + \mu(1?0?) \nonumber\\& + \mu(10??)\} + 4r\mu(1?01) + 2p\mu(100?)\nonumber\\
&= -[p(1-r)+q]\mu(10?) - \{2q + r(4q^{2} + 4qp^{2} - 2q^{3} + 2p^{2} + 2q^{2}p)\}\mu(100?) - 2r(1-p^{2})\mu(1??1) \nonumber\\&- r(1-p)\mu(1?1) - r(4q^{2} + 4qp^{2} - 2q^{3} + 2q^{2}p)\{\mu(1???) + \mu(1??0) + \mu(1?0?) + \mu(10??)\} - r(4q^{2} + 4qp^{2} \nonumber\\& - 2q^{3} + 2p^{2} + 2q^{2}p)\{\mu(1?00) + \mu(10?0)\}.
\end{align}
On the other, using \eqref{F_{p,q}mu(100?)}, \eqref{F_{p,q}mu(1?)}, \eqref{F_{p,q}mu(10?)}, \eqref{F_{p,q}mu(1??)}, \eqref{F_{p,q}mu(1?0?)}, \eqref{F_{p,q}mu(10??)}, \eqref{F_{p,q}mu(1?01)}, and applying \eqref{identity_1} as well as the identities 
\begin{enumerate*}
\item $\mu(000\widehat{**}) = \mu(000?) + \mu(0000?) - \mu(000?1)$,
\item $\mu(000?\{0,?\}) = \mu(000?) - \mu(000?1)$,
\item $\mu(000?\{0,?\}^{2}) = \mu(000?) - \mu(000?1) - \mu(000?\{0,?\}1)$
\item and $\mu(000\widehat{**}\{0,?\}) = \mu(000?) + \mu(0000?) - \mu(000?1) - \mu(000\widehat{**}1)$, 
\end{enumerate*}
we have
\begin{align}\label{adjust_2_F_{p,q}_terms_sum}
& -2pr\{\F_{p,q}\mu(1?) + \F_{p,q}\mu(10?)\} - 2p^{2}r\{\F_{p,q}\mu(1??) + \F_{p,q}\mu(1?0?) + \F_{p,q}\mu(10??)\} - 4r\F_{p,q}\mu(1?01) \nonumber\\&- 2p\F_{p,q}\mu(100?)\nonumber\\
&= -[2pr^{3} + 2p^{2}r^{2}(1-p) + 2p^{2}r^{3}(1-p) + 4p^{3}r^{3}(1-p) + 8(1-p)r^{2}pq + 2p^{3}r^{2}]\mu(000?) \nonumber\\&- [2p^{2}r^{2}(1-p) + 2p^{3}r^{3}(1-p) + 4(1-p)pr^{2}q + 2p^{3}r^{2}]\mu(0000?) - 2p^{3}r^{2}\mu(00000?) \nonumber\\&+ [2p^{2}r^{2}(1-p) + 2p^{2}r^{3}(1-p) + 4p^{3}r^{3}(1-p) - 4(1-p)r^{2}q(r-p) + 2p^{3}r^{2}]\mu(000?1) \nonumber\\&+ [4p^{3}r^{3}(1-p) + 2p^{3}r^{2}]\mu(000?\{0,?\}1) + [2p^{3}r^{3}(1-p) + 2p^{3}r^{2}]\mu(0000?1) - \{2pqr^{2}\mu(\widehat{***}) \nonumber\\&+ 2prC_{10?} + 2pC_{100?} + D\},
\end{align}
where $D = 2prD_{10?} + 2p^{2}r(C_{1??} + C_{1?0?} + C_{10??}) + 4rC_{1?01}$. We combine the terms involving $\mu(000?)$ and $\mu(000\widehat{**})$ from \eqref{w_{1}_ineq} with the term involving $\mu(000?)$ from \eqref{adjust_2_F_{p,q}_terms_sum} to get
\begin{align}\label{imp_1}
& 2pr^{2}(1 + p + 2q)\mu(000?) - 2qpr^{2}\mu(000\widehat{**}) - [2pr^{3} + 2p^{2}r^{2}(1-p) + 2p^{2}r^{3}(1-p) + 4p^{3}r^{3}(1-p) \nonumber\\&+ 8(1-p)r^{2}pq + 2p^{3}r^{2}]\mu(000?) \nonumber\\
&= [2pqr^{2}\{5p - 2 + p^{2}\} + 6p^{4}r^{2} - 4p^{4}r^{2}(p+q)]\mu(000?) - 2qpr^{2}\mu(0000?) + 2qpr^{2}\mu(000?1).
\end{align}
Next, we combine the terms involving $\mu(0000?)$ from \eqref{w_{1}_ineq}, \eqref{adjust_2_F_{p,q}_terms_sum} and \eqref{imp_1} to get
\begin{align}\label{imp_2}
& 2p^{2}r^{2}\mu(0000?) - [2p^{2}r^{2}(1-p) + 2p^{3}r^{3}(1-p) + 4(1-p)pr^{2}q + 2p^{3}r^{2}]\mu(0000?) - 2qpr^{2}\mu(0000?)\nonumber\\
&= -(6pqr^{2} - 4p^{2}qr^{2})\mu(0000?) - 2p^{3}r^{3}(1-p)\mu(0000?).
\end{align}
We combine the last term from \eqref{imp_2} with the terms involving $\mu(00000?)$ in \eqref{w_{1}_ineq} and \eqref{adjust_2_F_{p,q}_terms_sum} to get
\begin{align}\label{imp_3}
& - 2p^{3}r^{3}(1-p)\mu(0000?) + 2p^{2}r^{2}\mu(00000?) - 2p^{3}r^{2}\mu(00000?)\nonumber\\
&= 2p^{2}r^{2}(1-p)(1-pr)\mu(00000?) - 2p^{3}r^{3}(1-p)\{\mu(10000?) + \mu(?0000?)\}.
\end{align}
Combining the terms involving $\mu(000?1)$ from \eqref{w_{1}_ineq}, \eqref{adjust_2_F_{p,q}_terms_sum} and \eqref{imp_1}, we get
\begin{align}\label{imp_4}
& [2p^{2}r^{2}(1-p) + 2p^{2}r^{3}(1-p) + 4p^{3}r^{3}(1-p) - 4(1-p)r^{2}q(r-p) + 2p^{3}r^{2}]\mu(000?1) \nonumber\\&- 2pr^{2}(1 + 2q + p)\mu(1?000) + 2qpr^{2}\mu(000?1)\nonumber\\
&= -2pr^{2}\{1 + q - p + 2p^{2} - 2p^{2}r(1-p) + pq(1-p) - p^{3}\}\mu(000?1) - 4(1-p)r^{2}q(r-p)\mu(000?1).
\end{align}
Combining the terms involving $\mu(000?\{0,?\}1)$ from \eqref{w_{1}_ineq} and \eqref{adjust_2_F_{p,q}_terms_sum}, we have
\begin{align}\label{imp_5}  
& [4p^{3}r^{3}(1-p) + 2p^{3}r^{2}]\mu(000?\{0,?\}1) - 2p^{2}r^{2}\mu(000?\{0,?\}1) = -2p^{2}r^{2}(1-p)(1-2pr)\mu(000?\{0,?\}1)
\end{align}
and combining the terms involving $\mu(0000?1)$ from \eqref{w_{1}_ineq} and \eqref{adjust_2_F_{p,q}_terms_sum}, we have
\begin{align}\label{imp_6}
& [2p^{3}r^{3}(1-p) + 2p^{3}r^{2}]\mu(0000?1) - 2p^{2}r^{2}\mu(0000?1) = -2p^{2}r^{2}(1-p)(1-pr)\mu(0000?1).
\end{align}
We now consider the second-last and third-last terms of \eqref{adjust_2_F_{p,q}_terms_sum}, and substituting the mathematical expressions for $C_{10?}$ and $C_{100?}$ from \S\ref{subsec:cylinder_computations}, we have
\begin{align}\label{imp_7}
& -2prC_{10?} - 2pC_{100?} = -2qp^{2}r^{2}\mu(1\{0,?\}\widehat{***}) - 2pq(1-q)r^{2}\mu(1\widehat{***}) - 2qp^{3}r\mu(\widehat{***}) \nonumber\\&- 2qp^{2}r^{2}\mu(1\{0,?\}\widehat{***}) - 2pqr^{2}(1-q+p)\mu(1\widehat{***})\nonumber\\
&= -4qp^{2}r^{2}\mu(1\{0,?\}\widehat{***}) - 2pqr^{2}(2-2q+p)\mu(1\widehat{***}) - 2qp^{3}r\mu(\widehat{***}), 
\end{align} 
so that combining the terms involving $\mu(\widehat{***})$ from \eqref{imp_7}, \eqref{w_{1}_ineq} and \eqref{adjust_2_F_{p,q}_terms_sum}, we have
\begin{align}\label{imp_8}
& 2qp^{2}r\mu(\widehat{***}) - 2pqr^{2}\mu(\widehat{***}) - 2qp^{3}r\mu(\widehat{***}) = 2pqr(p - r - p^{2})\mu(\widehat{***}).
\end{align}
Combining the term involving $\mu(?000?)$ from \eqref{w_{1}_ineq}, the first term of \eqref{imp_2} and the term involving $\mu(1\{0,?\}\widehat{***})$ from \eqref{imp_7}, and writing $\mu(000?) = \mu(?000?) + \mu(0000?) + \mu(1000?)$, we have
\begin{align}\label{imp_9}
& - pr^{2}(1 + 2q)\mu(?000?) - (6pqr^{2} - 4p^{2}qr^{2})\mu(0000?) - 4qp^{2}r^{2}\mu(1\{0,?\}\widehat{***}) \nonumber\\
&= -2p^{2}qr^{2}\mu(000?) - pr^{2}\{1+2q(1-p)\}\mu(?000?) - 6pqr^{2}(1-p)\mu(0000?) - 2p^{2}qr^{2}\mu(1000?) \nonumber\\&- 4qp^{2}r^{2}[\mu(1\{0,?\}\widehat{***})- \mu(1000?)],
\end{align}
so that finally, combining the terms involving $\mu(000?)$ from \eqref{imp_1} and \eqref{imp_9}, we have 
\begin{align}\label{imp_10}
& [2pqr^{2}\{5p - 2 + p^{2}\} + 6p^{4}r^{2} - 4p^{4}r^{2}(p+q)]\mu(000?) - 2p^{2}qr^{2}\mu(000?)\nonumber\\
&= [2pqr^{2}\{4p - 2 + p^{2}\} + 6p^{4}r^{2} - 4p^{4}r^{2}(p+q)]\mu(000?).
\end{align}

From \eqref{w_{1}_ineq}, \eqref{adjust_2_mu_terms_sum} and \eqref{imp_1} through \eqref{imp_10}, we obtain the inequality
\begin{align}\label{w_{2}_ineq}
& w_{2}(\F_{p,q}\mu) \leqslant w_{2}(\mu) - q\mu(?) - [p(1-r)+q][\mu(0?) + \mu(00?) + \mu(10?)] - \{2q + r(4q^{2} + 4qp^{2} - 2q^{3} + 2p^{2} \nonumber\\&+ 2q^{2}p)\}\mu(100?) - 2r(1-p^{2})\mu(1??1) - r(1-p)\mu(1?1) - r(4q^{2} + 4qp^{2} - 2q^{3} + 2q^{2}p)\{\mu(1???) + \mu(1??0) \nonumber\\&+ \mu(1?0?) + \mu(10??)\} - r(4q^{2} + 4qp^{2} - 2q^{3} + 2p^{2} + 2q^{2}p)\{\mu(1?00) + \mu(10?0)\} - pr^{2}(1 + 2q)\mu(1\widehat{***}1) \nonumber\\&+ 2p^{2}r^{2}(1-p)(1-pr)\mu(00000?) - 2p^{3}r^{3}(1-p)\{\mu(10000?) + \mu(?0000?)\} - 2pr^{2}\{1 + q - p + 2p^{2} \nonumber\\&- 2p^{2}r(1-p) + pq(1-p) - p^{3}\}\mu(000?1) - 4(1-p)r^{2}q(r-p)\mu(000?1) - 2p^{2}r^{2}(1-p)(1-2pr)\nonumber\\&\mu(000?\{0,?\}1) - 2p^{2}r^{2}(1-p)(1-pr)\mu(0000?1) - 2pqr^{2}(2-2q+p)\mu(1\widehat{***}) + 2pqr(p - r - p^{2})\mu(\widehat{***}) \nonumber\\&- pr^{2}\{1+2q(1-p)\}\mu(?000?) - 6pqr^{2}(1-p)\mu(0000?) - 2p^{2}qr^{2}\mu(1000?) - 4qp^{2}r^{2}[\mu(1\{0,?\}\widehat{***}) \nonumber\\&- \mu(1000?)] + [2pqr^{2}\{4p - 2 + p^{2}\} + 6p^{4}r^{2} - 4p^{4}r^{2}(p+q)]\mu(000?) - D.
\end{align}

\subsection{The fourth step of composing the weight function}\label{subsec:compose_4}
The third adjustment is carried out as follows:
\begin{equation}\label{adjust_3}
w_{3}(\mu) = w_{2}(\mu) - 2(q+p^{2}r)\mu(100?) - 2p^{2}r\{\mu(1?00) + \mu(10?0)\}.
\end{equation}
Once again, by applying \eqref{general_adjustment_form}, we have, on one hand
\begin{align}\label{adjust_3_mu_terms_update}
&2(q+p^{2}r)\mu(100?) + 2p^{2}r\{\mu(1?00) + \mu(10?0)\} - \{2q + r(4q^{2} + 4qp^{2} - 2q^{3} + 2p^{2} + 2q^{2}p)\}\mu(100?) \nonumber\\&- r(4q^{2} + 4qp^{2} - 2q^{3} + 2p^{2} + 2q^{2}p)\{\mu(1?00) + \mu(10?0)\}\nonumber\\
&= -r(4q^{2} + 4qp^{2} - 2q^{3} + 2q^{2}p)\{\mu(100?) + \mu(1?00) + \mu(10?0)\},
\end{align}
and on the other, using \eqref{F_{p,q}mu(100?)}, \eqref{F_{p,q}mu(1?00)} and \eqref{F_{p,q}mu(10?0)}, and adopting similar algebraic manipulations as those used to obtain \eqref{adjust_2_F_{p,q}_terms_sum}, we get:
\begin{align}\label{adjust_3_F_{p,q}mu_terms_sum}
& - 2(q+p^{2}r)\F_{p,q}\mu(100?) - 2p^{2}r\{\F_{p,q}\mu(1?00) + \F_{p,q}\mu(10?0)\} = - 2(q+p^{2}r)\{(1-p)p^{2}r\mu(000\widehat{***}) \nonumber\\&+ D_{100?}\} - 2p^{2}r\{(1-p)rp^{2}\mu(000?) + (1-p)r^{2}p\mu(000?\{0,?\}1) + (1-p)r^{2}(1+p-q)\mu(000?1) + C_{1?00}\} \nonumber\\&- 2p^{2}r\{(1-p)p^{2}r\mu(000\widehat{**}) + (1-p)pr^{2}\mu(000\widehat{**}1) + C_{10?0}\}\nonumber\\
&= -\{2qp^{2}r^{2} + 2q^{2}p^{2}r + 6p^{4}r^{2}(1-p)\}\mu(000?) - \{2qp^{2}r(1-p) + 4p^{4}r^{2}(1-p)\}\mu(0000?) - \{2qp^{2}r(1-p) \nonumber\\&+ 2p^{4}r^{2}(1-p)\}\mu(00000?) + \{2qp^{2}r(1-p) + 4p^{4}r^{2}(1-p) - 2p^{2}r^{3}(1-p)(1+p-q)\}\mu(000?1) \nonumber\\&+ \{2qp^{2}r(1-p) + 2p^{4}r^{2}(1-p) - 4p^{3}r^{3}(1-p)\}\mu(000?\{0,?\}1) + \{2qp^{2}r(1-p) + 2p^{4}r^{2}(1-p) \nonumber\\&- 2p^{3}r^{3}(1-p)\}\mu(0000?1) - D',
\end{align}
where $D' = 2(q+p^{2}r)D_{100?} + 2p^{2}r\{C_{1?00} + C_{10?0}\}$. Combining the term involving $\mu(000?1)$ from \eqref{w_{2}_ineq}, the term involving $\mu(1?00)$ from \eqref{adjust_3_mu_terms_update}, and the term involving $\mu(000?1)$ from \eqref{adjust_3_F_{p,q}mu_terms_sum}, we have
\begin{align}\label{imp_11}
&- r(4q^{2} + 4qp^{2} - 2q^{3} + 2q^{2}p)\mu(1?00) - 2pr^{2}\{1 + q - p + 2p^{2} - 2p^{2}r(1-p) + pq(1-p) - p^{3}\}\mu(000?1) \nonumber\\&- 4(1-p)r^{2}q(r-p)\mu(000?1) + \{2qp^{2}r(1-p) + 4p^{4}r^{2}(1-p) - 2p^{2}r^{3}(1-p)(1+p-q)\}\mu(000?1)\nonumber\\
&\leqslant - r(4q^{2} + 2qp^{2}(1+p) - 2q^{3} + 2q^{2}p)\mu(1?00) - 2pr^{2}\{1 + q - p + p^{3} + 2p^{2}q(1-p) + pq(1-p)\}\nonumber\\&\mu(000?1) - 4(1-p)r^{2}q(r-p)\mu(000?1) - 2p^{2}r^{3}(1-p)(1+p-q)\mu(000?1). % - 2qp^{2}r(1-p)\{\mu(1?00) - \mu(000?1)\}
\end{align}
If $r-p \geqslant 0$, i.e.\ $2p+q \leqslant 1$, \eqref{imp_11} is as far as we need to go when it comes to the coefficient of $\mu(000?1)$. Otherwise, we continue as follows:
\begin{align}\label{imp_12}
&- r(4q^{2} + 2qp^{2}(1+p) - 2q^{3} + 2q^{2}p)\mu(1?00) - 2pr^{2}\{1 + q - p + p^{3} + 2p^{2}q(1-p) + pq(1-p)\}\mu(000?1) \nonumber\\&+ 4(1-p)r^{2}q(p-r)\mu(000?1) - 2p^{2}r^{3}(1-p)(1+p-q)\mu(000?1)\nonumber\\ % - 2qp^{2}r(1-p)\{\mu(1?00) - \mu(000?1)\}
&\leqslant - r(4q^{2} + 2qp^{2}(1+p) - 2q^{3} + 2q^{2}p)\mu(1?00) - 2pr^{2}\{1 + q - p + p^{3} + 2p^{2}q(1-p) + pq(1-p) \nonumber\\&- 2q(1-p)\}\mu(000?1) - 2p^{2}r^{3}(1-p)(1+p-q)\mu(000?1)\nonumber\\ % - 2qp^{2}r(1-p)\{\mu(1?00) - \mu(000?1)\}
&= - r(4q^{2} + 2qp^{2}(1+p) - 2q^{3} + 2q^{2}p)\mu(1?00) - 2pr^{2}\{r + p^{3} + 2p^{2}q(1-p) + pq(1-p) + 2pq\}\mu(000?1) \nonumber\\&- 2p^{2}r^{3}(1-p)(1+p-q)\mu(000?1).
\end{align}
\begin{remark}\label{rem:imp_11_imp_12_combined}
The crucial thing to note here is that each of the main terms in the upper bounds in both \eqref{imp_11} and \eqref{imp_12} is non-positive. For the sake of brevity, though, we can simply consider $- r(4q^{2} + 2qp^{2}(1+p) - 2q^{3} + 2q^{2}p)\mu(1?00) - 2p^{2}r^{3}(1-p)(1+p-q)\mu(000?1)$ as a common upper bound for both the scenarios $2p+q \leqslant 1$ and $2p+q > 1$.
\end{remark}

Next, we combine the term involving $\mu(1??0)$ from \eqref{w_{2}_ineq}, the term involving $\mu(10?0)$ from \eqref{adjust_3_mu_terms_update}, and the terms involving $\mu(000?\{0,?\}1)$ from \eqref{adjust_3_F_{p,q}mu_terms_sum} and \eqref{w_{2}_ineq}, to get
\begin{align}\label{imp_13}
& -r(4q^{2} + 4qp^{2} - 2q^{3} + 2q^{2}p)\{\mu(1??0) + \mu(10?0)\} - 2p^{2}r^{2}(1-p)(1-2pr)\mu(000?\{0,?\}1) \nonumber\\&+ \{2qp^{2}r(1-p) + 2p^{4}r^{2}(1-p) - 4p^{3}r^{3}(1-p)\}\mu(000?\{0,?\}1)\nonumber\\
&\leqslant -r\{4q^{2} + 2qp^{2}(1+p) - 2q^{3} + 2q^{2}p\}\{\mu(1??0) + \mu(10?0)\} - 2p^{2}r^{2}(1-p)(1-p^{2})\mu(000?\{0,?\}1).
\end{align}
Combining the terms involving $\mu(0000?)$ and $\mu(0000?1)$ from \eqref{adjust_3_F_{p,q}mu_terms_sum} with the term involving $\mu(0000?1)$ from \eqref{w_{2}_ineq}, we have
\begin{align}\label{imp_14}
& - \{2qp^{2}r(1-p) + 4p^{4}r^{2}(1-p)\}\mu(0000?) - 2p^{2}r^{2}(1-p)(1-pr)\mu(0000?1) + \{2qp^{2}r(1-p) \nonumber\\&+ 2p^{4}r^{2}(1-p) - 2p^{3}r^{3}(1-p)\}\mu(0000?1) \leqslant -2p^{2}r^{2}(1-p)(1-p^{2})\mu(0000?1) - 4p^{4}r^{2}(1-p)\mu(0000?).
\end{align}
Combining the terms involving $\mu(000?)$ from \eqref{adjust_3_F_{p,q}mu_terms_sum} and \eqref{w_{2}_ineq} yields
\begin{align}\label{imp_15}
& [2pqr^{2}\{4p - 2 + p^{2}\} + 6p^{4}r^{2} - 4p^{4}r^{2}(p+q)]\mu(000?) - \{2qp^{2}r^{2} + 2q^{2}p^{2}r + 6p^{4}r^{2}(1-p)\}\mu(000?)\nonumber\\
&= [2pqr^{2}\{3p - 2 + p^{2}\} - 2q^{2}p^{2}r + 2p^{5}r^{2} - 4p^{4}qr^{2}]\mu(000?).
\end{align}
We combine the terms involving $\mu(00000?)$ from both \eqref{w_{2}_ineq} and \eqref{adjust_3_F_{p,q}mu_terms_sum}, the term $2p^{5}r^{2}\mu(000?)$ from \eqref{imp_15}, the term involving $\mu(0000?)$ from \eqref{imp_14} and the terms involving $\mu(0?)$ and $\mu(00?)$ from \eqref{w_{2}_ineq}, and use the inequalities $\mu(00000?) \leqslant \mu(0000?)$ and $\mu(000?) \leqslant \mu(00?) \leqslant \mu(0?)$. We consider two situations, the first of which is $1-pr-3p^{2} > 0$, in which case we have
\begin{align}\label{imp_16}
& 2p^{2}r^{2}(1-p)(1-pr)\mu(00000?) - \{2qp^{2}r(1-p) + 2p^{4}r^{2}(1-p)\}\mu(00000?) + 2p^{5}r^{2}\mu(000?) \nonumber\\&- 4p^{4}r^{2}(1-p)\mu(0000?) - [p(1-r)+q][\mu(0?) + \mu(00?)]\nonumber\\
&\leqslant 2p^{2}r^{2}(1-p)(1-pr-3p^{2})\mu(00000?) + 2p^{5}r^{2}\mu(000?) - p^{2}\{\mu(0?) + \mu(00?)\} - 2qp^{2}r(1-p)\mu(00000?) \nonumber\\&- q(1+p)\{\mu(0?) + \mu(00?)\}\nonumber\\
%&\leqslant 2p^{2}r^{2}(1-p)\{1 - p(1-p^{2}) - pr(1-p) - 3p^{2}(1-p)\}\mu(000?) - 2qp^{2}r(1-p)\mu(00000?) \nonumber\\&- p^{2}\{\mu(0?) + \mu(00?)\} - q(1+p)\{\mu(0?) + \mu(00?)\}\nonumber\\
&\leqslant 2p^{2}r^{2}\{1 - p(1-p^{2}) - pr(1-p) - 3p^{2}(1-p)\}\mu(000?) - 2p^{2}\mu(000?) - 2qp^{2}r(1-p)\mu(00000?) \nonumber\\&- q(1+p)\{\mu(0?) + \mu(00?)\}\nonumber\\
&= 2p^{2}[r^{2}\{1 - p(1-p^{2}) - pr(1-p) - 3p^{2}(1-p)\} - 1]\mu(000?) - 2qp^{2}r(1-p)\mu(00000?) \nonumber\\&- q(1+p)\{\mu(0?) + \mu(00?)\}. 
\end{align}
Note here that $r^{2} < 1$ and $1 - p(1-p^{2}) - pr(1-p) - 3p^{2}(1-p) \leqslant 1$, so that the coefficient of $\mu(000?)$ in the right side of \eqref{imp_16} is non-positive. The second situation is $1-pr-3p^{2} \leqslant 0$, in which case we simply observe that $2p^{5}r^{2}\mu(000?) - p^{2}\{\mu(0?) + \mu(00?)\} \leqslant 2p^{2}(p^{3}r^{2}-1)\mu(000?)$, and the coefficient here is non-positive. 
\begin{remark}\label{rem:imp_16_other_case_combined}
The above discussion shows that whether $1-pr-3p^{2} > 0$ or not, the leftmost expression in \eqref{imp_16} is bounded above by $- 2qp^{2}r(1-p)\mu(00000?) - q(1+p)\{\mu(0?) + \mu(00?)\}$.
\end{remark} 
Next, we combine the term $2pqr^{2}\{3p - 2 + p^{2}\}\mu(000?)$ from \eqref{imp_15} with the term $- q(1+p)\{\mu(0?) + \mu(00?)\}$ that remains in the upper bound mentioned in Remark~\ref{rem:imp_16_other_case_combined}, and use $2pr \leqslant (p+r)^{2} \leqslant 1$, to get
\begin{align}\label{imp_17}
& 2pqr^{2}\{3p - 2 + p^{2}\}\mu(000?) - q(1+p)\{\mu(0?) + \mu(00?)\} \nonumber\\
&\leqslant 2pqr^{2}\{3p - 2 + p^{2}\}\mu(000?) - 2pqr\mu(000?) - q(1+p-pr)\{\mu(0?) + \mu(00?)\}\nonumber\\
&= -2pqr^{2}(1-p)(p+2)\mu(000?) - 2pqr(1-2pr)\mu(000?) - q(1+p-pr)\{\mu(0?) + \mu(00?)\}. 
\end{align}

From \eqref{w_{2}_ineq}, and using \eqref{adjust_3_mu_terms_update}, Remark~\ref{rem:imp_11_imp_12_combined}, \eqref{imp_13}, \eqref{imp_14}, \eqref{imp_15}, Remark~\ref{rem:imp_16_other_case_combined} and \eqref{imp_17}, we have:
\begin{align}\label{w_{3}_ineq}
& w_{3}(\F_{p,q}\mu) \leqslant w_{3}(\mu) - q\mu(?) - q(1+p-pr)[\mu(0?) + \mu(00?)] - [p(1-r)+q]\mu(10?) - r(4q^{2} + 4qp^{2} - 2q^{3} \nonumber\\&+ 2q^{2}p)\}\{\mu(100?) + \mu(1???) + \mu(1?0?) + \mu(10??)\} - 2r(1-p^{2})\mu(1??1) - r(1-p)\mu(1?1) \nonumber\\& - r(4q^{2} + 2qp^{2}(1+p) - 2q^{3} + 2q^{2}p)\{\mu(1??0)+ \mu(1?00) + \mu(10?0)\} - pr^{2}(1 + 2q)\mu(1\widehat{***}1) \nonumber\\& - 2p^{3}r^{3}(1-p)\{\mu(10000?) + \mu(?0000?)\} - 2pqr^{2}(2-2q+p)\mu(1\widehat{***}) + 2pqr(p - r - p^{2})\mu(\widehat{***}) \nonumber\\&- pr^{2}\{1+2q(1-p)\}\mu(?000?) - 6pqr^{2}(1-p)\mu(0000?) - 2p^{2}qr^{2}\mu(1000?) - 4qp^{2}r^{2}[\mu(1\{0,?\}\widehat{***}) \nonumber\\&- \mu(1000?)] - 2p^{2}r^{3}(1-p)(1+p-q)\mu(000?1) - 2p^{2}r^{2}(1-p)(1-p^{2})\mu(000\widehat{**}1) \nonumber\\&- \{2pqr^{2}(1-p)(2+p) + 2pqr(1-2pr) + 2p^{2}q^{2}r + 4p^{4}qr^{2}\}\mu(000?) - 2qp^{2}r(1-p)\mu(00000?) - D - D'.
\end{align}

\subsection{The final step of composing the weight function and the desired conclusion}\label{subsec:compose_5}
The final adjustment to the existing weight function is accomplished as follows: $w_{4}(\mu) = w_{3}(\mu) - q\mu(?)$. Via \eqref{general_adjustment_form}, we see that the term involving $\mu(?)$ on the right side of \eqref{w_{3}_ineq} vanishes, whereas the term involving $\mu(\widehat{***})$ now updates to 
\begin{align}
& 2pqr(p - r - p^{2})\mu(\widehat{***}) - qr\mu(\widehat{***}) = [-2pqr^{2} + qr\{2p^{2}(1-p) - 1\}]\mu(\widehat{***}),\nonumber
\end{align}
and all we have to note here is that $2p^{2}(1-p) \leqslant 2p(1-p) \leqslant 1$. This final step transforms \eqref{w_{3}_ineq} into  
\begin{align}\label{w_{4}_ineq}
& w_{4}(\F_{p,q}\mu) \leqslant w_{4}(\mu) - q(1+p-pr)[\mu(0?) + \mu(00?)] - [p(1-r)+q]\mu(10?) - r(4q^{2} + 4qp^{2} - 2q^{3} + 2q^{2}p)\}\nonumber\\&\{\mu(100?) + \mu(1???) + \mu(1?0?) + \mu(10??)\} - 2r(1-p^{2})\mu(1??1) - r(1-p)\mu(1?1)  - r(4q^{2} + 2qp^{2}(1+p) \nonumber\\&- 2q^{3} + 2q^{2}p)\{\mu(1??0)+ \mu(1?00) + \mu(10?0)\} - pr^{2}(1 + 2q)\mu(1\widehat{***}1) - 2p^{3}r^{3}(1-p)\{\mu(10000?) \nonumber\\& + \mu(?0000?)\} - 2pqr^{2}(2-2q+p)\mu(1\widehat{***}) - [2pqr^{2} + qr\{1 - 2p^{2}(1-p)\}]\mu(\widehat{***}) - pr^{2}\{1+2q(1-p)\}\nonumber\\&\mu(?000?) - 6pqr^{2}(1-p)\mu(0000?) - 2p^{2}qr^{2}\mu(1000?) - 4qp^{2}r^{2}[\mu(1\{0,?\}\widehat{***}) - \mu(1000?)] \nonumber\\&- 2p^{2}r^{3}(1-p)(1+p-q)\mu(000?1) - 2p^{2}r^{2}(1-p)(1-p^{2})\mu(000\widehat{**}1) - \{2pqr^{2}(1-p)(2+p) \nonumber\\&+ 2pqr(1-2pr) + 2p^{2}q^{2}r + 4p^{4}qr^{2}\}\mu(000?) - 2qp^{2}r(1-p)\mu(00000?) - D - D'.
\end{align}
We see that \eqref{w_{4}_ineq} has the same form as \eqref{rough_inequality_form} (it is crucial to recall, here, that both $D$ and $D'$ are non-negative) -- in particular, the coefficient of every summand in the right side, other than $w_{4}(\mu)$ itself, is non-positive, as desired. From \eqref{initial_weight}, \eqref{adjust_1}, \eqref{adjust_2}, \eqref{adjust_3} and the final adjustment, we obtain our final weight function:
\begin{multline}\label{final_weight}
w_{3}(\mu) = (1 - p^{2} - pq - q)\mu(?) + 2\mu(0?) - \mu(?0?) + 2r(1-p^{2})\mu(100?) - 2pr\{\mu(1?) + \mu(10?)\} \\- 2p^{2}r\{\mu(1??) + \mu(1?0?) + \mu(10??)\} - 4r\mu(1?01) - 2p^{2}r\{\mu(1?00) + \mu(10?0)\}.
\end{multline}

It is now time to deduce our desired conclusion, i.e.\ $\mu(?)$ whenever $\mu$ is stationary for $\F_{p,q}$, from \eqref{w_{4}_ineq}. This conclusion is immediate when $r = 0$, i.e.\ $p+q = 1$, so that henceforth, we only consider $r > 0$. To begin with, we note that when $\mu$ is stationary for $\F_{p,q}$, we have $w_{4}(\F_{p,q}\mu) = w_{4}(\mu)$, and since every term, other than $w_{4}(\mu)$, in the right side of \eqref{w_{4}_ineq} is non-positive, each of them must equal $0$. In particular, this means that each of $-[2pqr^{2} + qr\{1 - 2p^{2}(1-p)\}]\mu(\widehat{***})$, $-[p(1-r)+q]\mu(10?)$ and $-2p^{2}r^{3}(1-p)(1+p-q)\mu(000?1)$ equals $0$. 

When $q > 0$, the coefficient $-[2pqr^{2} + qr\{1 - 2p^{2}(1-p)\}]$ of $\mu(\widehat{***})$ is strictly negative, which yields $\mu(\widehat{***}) = 0$. Since $\mu(?) = \F_{p,q}\mu(?) = r\mu(\widehat{***})$, we have $\mu(?) = 0$. When $q = 0$ and $p > 0$, the coefficients $-[p(1-r)+q]$ and $-2p^{2}r^{3}(1-p)(1+p-q)$ are both strictly negative, which implies that $\mu(10?) = \mu(000?1) = 0$. The former, via \eqref{F_{p,q}mu(10?)}, yields $\mu(000\widehat{**}) = 0$, and this, together with the latter, yields $\mu(000?) = 0$. We now focus on finding $\F_{p,q}\mu(000?)$. 

In order for $(\F_{p,q}\eta(0), \F_{p,q}\eta(1), \F_{p,q}\eta(2), \F_{p,q}\eta(3))$ to equal $(000?)$, we must have $(\eta(3),\eta(4),\eta(5)) \in \widehat{***}$. If each of $\eta(0), \eta(1), \eta(2)$ belongs to $\{0,?\}$, then each of the events $\F_{p,q}\eta(i) = 0$, for $i = 0, 1, 2$, happens with probability $p$. If $\eta(0) = 1$ and $\eta(1), \eta(2) \in \{0,?\}$, then $\F_{p,q}\eta(0) = 0$ happens with probability $1-q$ and each of $\F_{p,q}\eta(1) = 0$ and $\F_{p,q}\eta(2) = 0$ happens with probability $p$. If $\eta(1) = 1$ and $\eta(2) \in \{0,?\}$, then each of $\F_{p,q}\eta(0) = 0$ and $\F_{p,q}\eta(1) = 0$ happens with probability $1-q$ and $\F_{p,q}\eta(2) = 0$ happens with probability $p$. If $\eta(2) = 1$, then each $\F_{p,q}\eta(i) = 0$, for $i = 0, 1, 2$, happens with probability $1-q$. Combining everything, we have $\F_{p,q}\mu(000?) = p^{3}r\mu(\{0,?\}^{3}\widehat{***}) + (1-q)p^{2}r\mu(1\{0,?\}^{2}\widehat{***}) + (1-q)^{2}pr\mu(1\{0,?\}\widehat{***}) + (1-q)^{3}r\mu(1\widehat{***})$.

When $\mu(000?) = 0$, $p > 0$ and $\mu$ is stationary for $\F_{p,q}$, we conclude from the previous paragraph that $\mu(\{0,?\}^{3}\widehat{***}) = \mu(1\{0,?\}^{2}\widehat{***}) = \mu(1\{0,?\}\widehat{***}) = \mu(1\widehat{***})= 0$. Adding these, we get $\mu(\widehat{***}) = 0$, and as argued above for the case $q > 0$, this implies that $\mu(?) = 0$.

\section{Acknowledgements}
For the duration of this project, Dhruv Bhasin has been supported by Grant No.\ 0203/2/2021/RD-II/3033 awarded by the National Board for Higher Mathematics (NBHM), Sayar Karmakar has been supported by Grant No.\ NSF DMS 2124222 awarded by the National Science Foundation (NSF), and Moumanti Podder has been supported by Grant No.\ SERB CRG/2021/006785 awarded by the Science and Engineering Research Board (SERB). In addition to these funders, the authors express their gratitude towards their respective institutes / university for supporting them in myriad ways and for fostering wonderful environments conducive to collaborations in research.

\bibliography{Percolation_games_bibliography}
\end{document}